\def\theequation{\thesection .\arabic{equation}}
\renewcommand\theequation{
\thesection.\@arabic\c@equation
}
\newtheorem{lemme}{Lemma}
 \newtheorem{remarque}{Remark}
\newtheorem{theorem}{Theorem}
\newcommand{\phii}{\Phi}                  
\newcommand{\R}{\mathbb{R}}               
\newcommand{\G}{\textbf{g}}               
\newcommand{\n}{\textbf{n}}               
\newcommand{\N}{\mathbb{N}}               
\newcommand{\h}{\delta t}                 
\newcommand{\Ka}{{\bf K}}                 
\newcommand{\V}{\textbf{V}}               
\newcommand{\sys}{\mathfrak{P}}           
\newcommand{\ga}{\gamma}                  
\newcommand{\Dlh}{D_l^h}                  
\newcommand{\pc}{\mathcal{P}_c}           
\newcommand{\ctea}{\mathcal{C}_1} 
\newcommand{\cteb}{\mathcal{C}_2} 
\newcommand{\X}{X} 
\newcommand{\Xlweta}{(X_l^w)^\eta} 
\newcommand{\Xlwetan}{(\X_l^w)^{n+1,\eta}} 
\newcommand{\sa}{s} 
\newcommand{\B}{\mathcal{B}}
\newcommand{\dt}{\partial_{t}}
\newcommand{\di}{\mathrm{div}}
\newcommand{\dd}{\mathrm{d}}
\newcommand{\grad}{\nabla}  
\begin{document}

\begin{frontmatter}


  \title{Study of degenerate parabolic system modeling the hydrogen displacement in a nuclear waste repository.}

  
  
  
 
   \author[florian]{Florian Caro}
  \ead{florian.caro@cea.fr} 
  \author[florian,mazen]{Bilal Saad\corref{florian,mazen}}
  \ead{bisaad@yahoo.fr} 
  
  \author[mazen]{Mazen  Saad} 
  
  \ead{Mazen.Saad@ec-nantes.fr}
  \address[florian]{CEA Saclay, DEN/DANS/DM2S/SFME/LSET, 
    91191 Gif Sur Yvette, France}
  \address[mazen]{Ecole Centrale de Nantes,
    Laboratoire de Math\'ematiques Jean Leray, UMR CNRS 6629\newline
    1, rue de la No\'e, 44321 Nantes France}
  
  \cortext[florian,mazen]{Corresponding author}





\begin{abstract}
  Our goal is the mathematical analysis of a two phase (liquid and
  gas) two components (water and hydrogen) system modeling the
  hydrogen displacement in a storage site for radioactive waste. We
  suppose that the water is only in the liquid phase and is
  incompressible. The hydrogen in the gas phase is supposed
  compressible and could be dissolved into the water with the Henry's
  law. The flow is described by the conservation of the mass of each
  components. The model is treated without simplified assumptions on
  the gas density. This model is degenerated due to vanishing terms.
  We establish an existence result for the nonlinear degenerate
  parabolic system based on new energy estimate on pressures.
  \end{abstract}

\begin{keyword}
  Degenerate system, nonlinear parabolic system, compressible flow,
  porous media\footnote{This work was partially supported by GNR
    MoMaS}.
\end{keyword}
\end{frontmatter}
\section{Introduction}

An important quantity of hydrogen can be produced by corrosion of
ferrous materials in a storage site for radioactive waste. A direct
consequence of this production is the growth of hydrogen pressure
around alveolus. This increasing gas pressure could break the
surrounding host rock and fractures could appear in the confinement
materials. This problem renews the mathematical interest in the
equation describing multiphase/multicomponent flows through porous
media. The cases of immiscible and incompressible flows have been
treated with "global pressure'' introduced by G. Chavent and J.
Jaffre \cite{chavent} by many authors, we refer for  instance to \cite{GM96, chen99,
  chen2002, feng} where existence results are obtained under various assumptions on physical data. For two
immiscible compressible flows without exchange between the phase, we
refer to \cite{CS08,CS07} where the authors obtain the existence of solution when the densities depend on the global
pressure and to \cite{ZS10,ZS11} for the general case where the
density of each phase depends on its own pressure. This approach is
also used in \cite{amaziane2,amaziane1} to treat a homogenization
problem of immiscible compressible water-gas flow in porous media.
For miscible and compressible flow, we refer to
\cite{choquet2008-2,choquet2008-1} for more details. 

In \cite{Bourgeat}, the authors derive a compositional model of
compressible multiphase flow in porous media. They focus their study
on models where the fluid is a mixture of two components: water
(mostly liquid) and hydrogen (mostly gas). An existence result has
been shown in \cite{smai} for this model under the assumptions of non
degeneracy and of strictly positive saturation.

Recently in
\cite{CS10}, the authors studied a new model of two compressible and
partially miscible phase flow in porous media, applied to gas
migration in an underground nuclear waste repository in the case where
the velocity of the mass exchange between dissolved hydrogen and
hydrogen in the gas phase is supposed finite.

Let us state the physical model used in this paper. We consider herein
a porous medium saturated with a fluid composed of two phases (liquid
and gas) and a mixture of two components (water and hydrogen) studied
in \cite{mikelic}. As reported in \cite{mikelic}, the author establish the existence of a weak solution, under non degeneracy and  slow
oscillation assumptions on the diagonal coefficients and
with small data for the hydrogen. Our aim is to show global solution for the degenerate system without restriction on data. \\
The water is supposed only in the liquid
phase (no vapor of water due to evaporation).  In order to define the
physical model, we write the \emph{mass conservation} of each
component
\begin{numcases}{(\sys)}
  \dt(\phii \sa_l\rho_l^{h}+\phii \sa_g\rho_g^{h} ) + \di(\rho_l^{h}
  \V_l + \rho_g^{h} \V_g) - \di(\rho_l\Dlh\nabla X_l^h)=
  r_g,\label{eq:hydrogene}\\
  \dt(\phii \sa_l\rho_l^{w} ) + \di(\rho_l^{w} \V_l ) =
  r_w.\label{eq:eau}
\end{numcases}
Here the subscript $l$ and $g$ represent respectively the liquid phase
and the gas phase.  Quantities $\phii$,  $\rho_l^{h}$, $\rho_g^{h}$, 
$\rho_\alpha=\rho_\alpha^{h}+ \rho_\alpha^{w}$, $\sa_\alpha$, $\X_l^h=\rho_l^h/\rho_l$ $\left(\X_l^h +\X_l^w = 1\right)$ and $\Dlh$
represent respectively the porosity of the medium, the density of dissolved hydrogen, the density of the hydrogen in the gas phase,  the density of the $\alpha$ phase ($\alpha = l,g$),  the saturation of the $\alpha$ phase ($\sa_l+\sa_g=1$),
the mass fraction of the hydrogen in the liquid
phase, the diffusion-dispersion tensor of the hydrogen in the liquid
phase. The velocity of each fluid $\V_\alpha$ is given by the Darcy's
law
\begin{align}
  \V_{\alpha}= -{\bf K} \frac{ k_{r_\alpha}(s_{\alpha})}{\mu_{\alpha}}
  \left(\nabla p_{\alpha}-\rho_{\alpha}(p_{\alpha})\textbf{g}\right),
\end{align}
where $\Ka$ is the intrinsic permeability tensor of the porous medium,
$k_{r_\alpha}$ the relative permeability of the $\alpha$ phase,
$\mu_\alpha$ the constant $\alpha$-phase's viscosity, $p_\alpha$ the
$\alpha$-phase's pressure and $\G$ the gravity. For detailed
presentation of the model we refer to the presentation of the
benchmark Couplex-Gaz  \cite{talandier} and \cite{Bourgeat,smai}.
The capillary pressure law is defined as 
$$
p_c(\sa_l) = p_g - p_l,
$$
is decreasing, $(\frac{\dd p_c}{\dd \sa_l}(\sa_l)<0, \text{ for all }
\sa_l\in[0,1])$ and $p_c(1)=0$.

The system \eqref{eq:hydrogene}--\eqref{eq:eau} is not complete, to
closing the system, we use the ideal gas law and the Henry's law
\begin{align}\label{law}
  \rho_g^h = \frac{M^h}{R T}p_g, \; \rho_l^h = M^h H^h p_g,
\end{align}
where the quantities $M^h$, $H^h$, $R$ and $T$ represent respectively
the molar mass of hydrogen, the Henry's constant for hydrogen, the
universal constant of perfect gases and $T$ the temperature.

By these formulation, the system \eqref{eq:hydrogene}--\eqref{eq:eau}
is closed and we choose the liquid and gas pressures as unknowns. From
\eqref{law}, the henry's law combined to the ideal gas law, to obtain
  that the density of hydrogen gas is proportional to the density of
  hydrogen dissolved
\begin{equation}\label{def:relation_densite}
  \rho_g^h = \ctea \rho_l^h \text{ where } \ctea = \frac{1}{H_h R T}(=52.51).
\end{equation}
Remark that the density of water $\rho_l^w$ in the liquid phase is
constant and from the Henry's law, we can write
$$
\rho_l\nabla \X_l^h = \cteb \X_l^w\nabla p_g,
$$
where $\cteb$ is a constant equal to $H^h M^h$.

Then the system \eqref{eq:hydrogene}--\eqref{eq:eau} can be writen as
\begin{numcases}{(\sys)}
  \partial_{t}\left(\phii m(\sa_{l}) \rho_l^h \right) +
  \di\left(\rho_l^h\V_{l} + \ctea \rho_l^h\V_{g}\right) -
  \di\left(\cteb \X_l^w \Dlh\nabla p_g \right)=r_g,&
\label{eq:hydrogene1}\\
  \partial_{t}\left(\phii \sa_{l}\right) +
  \di\left(\V_{l}\right)=\frac{r_w}{\rho_l^w}.&
  \label{eq:eau1}
\end{numcases} 
where $m(\sa_l)=\sa_l+\ctea \sa_g$.\\

Note that the mass exchange between dissolved hydrogen and
hydrogen in the gas phase is static using the Henry's law opposite then
supposed in \cite{CS10}.

\section{Assumptions and main result}\label{sec:assump}

The main point is to handle a priori estimates on the
approximate solution. Due to the degeneracy for dissipative terms
$\di(\rho_l^h M_{\alpha}\nabla p_\alpha )$, we can't control the
discrete gradient of pressure since the mobility of each phase
vanishes in the region where the phase is missing. So, we are going to
use the feature of global pressure to obtain uniform estimates on the
gradient of the global pressure and the gradient of a capillary term
to treat the degeneracy of the dissipative terms. Let summarize some
useful notations in the sequel.  We recall the conception of the
global pressure as describe in \cite{chavent}
\begin{align}\label{form}
M(s_l)\nabla p = M_l(s_l) \nabla p_l + M_g(s_g) \nabla p_g,
\end{align}
with the $\alpha$-phase's mobility $M_\alpha$ and the total mobility are defined by 
$$
M_{\alpha}(s_{\alpha})=k_{r_\alpha}(s_{\alpha})/ \mu_{\alpha}, \quad
M(s_l) = M_l(s_l)+M_g(s_g).
$$
This pressure $p$ can be written as
\begin{align}\label{p}
  p=p_{g}+\tilde{p}(s_l)=p_{l}+\bar{p}(s_l),
\end{align}
with 
\begin{align*}
  \frac{\dd \tilde{p} }{\dd s_l} = -\frac{M_{l}(s_{l})}{M(s_{l})}
  \frac{\dd p_c}{\dd s_l}  \text{ and } 
  \frac{\dd \bar{p}}{\dd s_l} = \frac{M_{g}(s_{g})}{M(s_{l})}
  \frac{\dd p_c}{\dd s_l}. 
\end{align*}
We also define the contribution of capillary terms by 
$$
\ga (s_l)=-\frac{M_{l}(s_l)M_{g}(s_g)}{M(s_l)}\frac{\dd p_c}{\dd
  s_l}(s_l)\geq 0 \text{ and } \mathcal{B}(s_l)=\int_{0}^{s_l}\ \ga(z)
\dd z.
$$
We complete the description of the model
\eqref{eq:hydrogene1}-\eqref{eq:eau1} by introducing boundary
conditions and initial conditions. Let $T>0$ be the final time fixed
and let be $\Omega$ a bounded open subset of $\R^d\ (d\geq1)$. We set
$Q_{T}=(0,T)\times \Omega$, $\Sigma_{T}=(0,T)\times
\partial \Omega$ and we note $\Gamma_l$ the part of the boundary of
$\Omega$ where the liquid saturation is imposed to one and
$\Gamma_n=\Gamma \backslash \Gamma_l$. The chosen mixed boundary
conditions on the pressures are
$$
\left\{ \begin{aligned} p_g(t,x)=p_l(t,x)=0 & \text{ on } (0,T)\times
    \Gamma_l, \\ \V_{l}\cdot \n = \V_{g}\cdot \n = 0 & \text{ on }
    (0,T)\times \Gamma_n,\\  \X_l^w \Dlh\nabla p_g\cdot \n =0 &
    \text{ on } (0,T)\times \Gamma_n,
\end{aligned} \right.
$$
where $\textbf{n}$ is the outward normal to $\Gamma_n$.

The initial conditions are defined on pressures
\begin{equation}\label{cd:initiale}
  p_{\alpha}(t=0) =p^{0}_{\alpha}\text{ in } \Omega,  \text{ for } \alpha=l,g.
\end{equation}
Next we introduce a classically physically relevant assumptions on the
coefficients of the system.
\begin{enumerate}[({H}1)]
\item The porosity $\phi\in W^{1,\infty}(\Omega)$ and there is two
  positive constants $\phi_{0}$ and $\phi_{1}$ such that $\phi_{0}\leq
  \phi(x)\leq \phi_{1}$ almost everywhere $x\in \Omega$.
\item There exists two positive constants $k_{0}$ and
  $k_{\infty}$ such that
  $$
  \left\| \textbf{K} \right\|_{(L^{\infty}(\Omega))^{d \times d}} \leq
  k_{\infty} \text{ and } \left<\textbf{K}(x) \xi,\xi \right>\geq k_{0}| \xi
  |^{2}, \forall \xi \in \R^d.
  $$
\item The functions $M_{l}$ and $M_{g}\in {\cal C}^{0}([0,1],\R^{+})$, $
  M_{\alpha}(s_{\alpha}=0)=0.$ and there is a positive
  constant $m_{0}>0$ such that for all $s_{l}\in [0,1]$,
  $$
  M_{l}(s_l) + M_{g}(s_g)\geq m_{0}.
  $$
\item The densities $\rho_{\alpha}$ $(\alpha\text{=l,g)}$ are in
  ${\cal C}^{1}(\R)$, increasing and there exists two positive constants
  $\rho_{m}>0$ and $\rho_{M}>0 $ such that $$0<\rho_{m}\leq
  \rho_{\alpha}(p_{\alpha})\leq \rho_{M},\ \alpha=l, g.$$
\item The capillary pressure fonction $p_c\in
  \mathcal{C}^{1}([0,1];\R^{+})$ and there exists $\underline{p_c}>0$
  such that $0<\underline{p_c}\leq |\frac{\dd p_c}{\dd s_l}|$.
\item The functions $\ r_{\omega}, \ r_{g}\in L^{2}(Q_{T}) \text{ and
  } r_{\omega}$, $r_{g}\geq 0$ a.e. for all $(t,x)\in Q_T$.
\item The diffusion-dispersion tensor $\Dlh$ (function of $x$ and
  $s_l$) is a nonlinear continuous function of the liquid saturation
  $s_{l}$ and is bounded for $x\in \Omega$ and $s_l\in [0,1]$. In
  addition, there exist a constant $d^*>0$ such that $\forall v\in
  \R^{d}$, $\forall x\in \Omega$, $\forall s_l\in [0,1]$,
  $\left\langle \Dlh(x,s_{l})v,v \right\rangle \geq d^* \|v\|^{2}$.
\item The function $\ga \in C^{1}\left([0,1];\R^{+} \right)$ satisfies
  $\ga(s_l)>0$ for $0<s_l< 1$ and $\ga(0)=\ga(1)=0.$ We assume that
  $\mathcal{B}^{-1}$ (the inverse of $\mathcal{B}(s_l)=\int_{0}^{s_l}\
  \ga(z) \dd z$) is an H\"{o}lder\footnote{This means that there
    exists a positive constant $b$ such that for all $a, b \in [0,\mathcal{B}(1)],$ one has
    $|\mathcal{B}^{-1}(a)-\mathcal{B}^{-1}(b)|\leq
    b|a-b|^{\theta}$.}  function of order $\theta$, with
  $0<\theta\leq 1, \text{ on } [0,\mathcal{B}(1)]$.
\end{enumerate}
Let us define the following Sobolev space
$$
H^{1}_{\Gamma_{l}}(\Omega) = \{ u\in H^{1}(\Omega); u=0 \text{ on }
\Gamma_{l} \}
$$
this is an Hilbert space with the norm $\|u
\|_{H^{1}_{\Gamma_{l}}(\Omega)} = \|\nabla u\|_{(L^{2}(\Omega))^{d}}$.

Let us state the main result of this paper


\begin{theorem}\label{theo:existence} 
  Let $(H1)$-$(H8)$ hold and let the initial conditions
  $(p^0_g,p^0_l)$ belongs in $L^2(\Omega)\times L^2(\Omega)$ with
  $0\le \sa_l^0\le 1$. Then there exists a solution
  $\left(p_g,p_l\right)$ satisfying
\begin{align}
  & p_g \in L^{2}(0,T;H^{1}_{\Gamma_l}(\Omega)) \text{ and }\sqrt{M_\alpha(\sa_\alpha)}\nabla p_\alpha\in L^2(Q_T), \label{1}\\ & \sa_l\geq 0 \text{
    a.e. in } Q_{T}, \B(\sa_l)\in L^{2}(0,T;H^{1}(\Omega)),\label{2}\\
  & \phii
  \partial_{t} (\rho_l^{h}(p_g)m(\sa_l))\in
  L^{2}(0,T;(H^{1}_{\Gamma_l}(\Omega))^{'}),\ \phii
  \partial_{t}\sa_l\in L^{2}(0,T;(H^{1}_{\Gamma_l}(\Omega))^{'}),\label{3}
  \end{align}
  in the sense that for all $\varphi, \psi \in
  C^{1}(0,T;H^{1}_{\Gamma_l}(\Omega))$ with
  $\varphi(T,.)=\psi(T,.)=0$,
  \begin{numcases}{}
  \begin{aligned}
    & - \int_{Q_T}\phii m(\sa_l)\rho_l^{h}(p_g)\partial_{t} \varphi
    \dd x \dd t - \int_{\Omega}\phii m(\sa_l^0)\rho_l^{h}(p_g^0)
    \varphi(0,x) \dd x \\ & \quad \quad +\int_{Q_{T}}\Ka
    \rho_l^{h}(p_g)M_l(\sa_l)(\nabla p_l - \rho_l(p_l)\G)\cdot \nabla
    \varphi \dd x \dd t \\ & \quad \quad\quad + \ctea\int_{Q_{T}}\Ka
    \rho_l^{h}(p_g)M_g(\sa_l)(\nabla p_g -\rho_g(p_g)\G)\cdot \nabla
    \varphi \dd x \dd t \\ & \quad \quad \quad \quad+ \int_{Q_{T}}
    \cteb \X_l^w \Dlh \nabla p_g \cdot \nabla \varphi \dd x \dd t =
    \int_{Q_{T}}r_{g}\varphi \dd x \dd t,
\end{aligned} & \label{eq:pg} \\
\begin{aligned}
  & -\int_{Q_T}\phii \sa_l \partial_{t} \psi \dd x \dd t
  -\int_{\Omega}\phii \sa_l^0 \psi(0,x) \dd x \\ & \quad \quad +
  \int_{Q_{T}}\Ka M_{l}(\sa_l) (\nabla p_l - \rho_l(p_l)\G) \cdot
  \nabla \psi \dd x \dd t =
  \int_{Q_{T}}\frac{r_{\omega}}{\rho_l^w}\psi \dd x \dd t,
\end{aligned}& \label{eq:pl}
\end{numcases}
and the initial conditions are satisfied in the sense that for all
$\xi\in H^1_{\Gamma_l}(\Omega)$, the functions
$
t\to \int_{\Omega}\phii\rho_l^{h}(p_g)m(\sa_l) \xi \dd x ,\text{ and }
t \to \int_{\Omega} \phii \sa_l \xi \dd x,
$
are in $C^0([0,T])$. Furthermore, we have
\begin{align*}
  & \left(\int_{\Omega}\phii\rho_l^h(p_g)m(\sa_l) \xi \dd
    x\right)(0)=\int_{\Omega}\phi\rho_l^h(p_g^0)m(\sa_l^0) \xi \dd
  x, \\
  & \left(\int_{\Omega}\phii \sa_l \xi \dd
    x\right)(0)=\int_{\Omega}\phii \sa_l^0 \xi \dd x.
  \end{align*}
\end{theorem}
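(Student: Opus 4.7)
The overall strategy is a vanishing viscosity scheme combined with the Chavent--Jaffre global pressure formulation. First I would introduce a non-degenerate approximation indexed by $\eta>0$, replacing the mobilities $M_\alpha(\sa_\alpha)$ by $M_\alpha^\eta:=M_\alpha(\sa_\alpha)+\eta$ and working with a truncation of $\sa_l$ to $[0,1]$ inside every nonlinear coefficient. For this regularized system, existence of $(p_g^\eta,p_l^\eta)\in L^2(0,T;H^1_{\Gamma_l}(\Omega))^2$ would be obtained by a Schauder fixed-point argument: freezing the pressures in the coefficients produces a linear uniformly parabolic system that is solved by a Faedo--Galerkin construction in finite-dimensional subspaces of $H^1_{\Gamma_l}(\Omega)$, and uniform $L^2(H^1)$ estimates plus Aubin--Simon compactness in $L^2(Q_T)$ close the fixed-point.

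The analytic core is a set of uniform-in-$\eta$ energy estimates based on the global pressure. Using $p_l^\eta$ as test function in \eqref{eq:eau1} and $p_g^\eta$ (suitably weighted so that the accumulation term produces a convex entropy absorbing the $\rho_l^h$ factor) as test function in \eqref{eq:hydrogene1}, and summing, the mixed gradient terms reorganize through \eqref{form} and the identities $M_l\nabla p_l=M_l\nabla p+\nabla\B(\sa_l)$, $M_g\nabla p_g=M_g\nabla p-\nabla\B(\sa_l)$ into the nonnegative quadratic form
\begin{equation*}
\int_{Q_T}\Ka\, M(\sa_l^\eta)|\nabla p^\eta|^2\,\dd x\,\dd t + \int_{Q_T}\Ka\, \ga(\sa_l^\eta)\,\Big|\tfrac{\dd p_c}{\dd \sa_l}\Big|\,|\nabla \sa_l^\eta|^2\,\dd x\,\dd t.
\end{equation*}
By (H2)--(H5) this controls $\nabla p^\eta$ and $\nabla\B(\sa_l^\eta)$ in $L^2(Q_T)$, and hence $\sqrt{M_\alpha^\eta(\sa_\alpha^\eta)}\nabla p_\alpha^\eta$ in $L^2(Q_T)$. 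The non-degenerate hydrogen diffusion $\cteb\X_l^w\Dlh\nabla p_g$ in \eqref{eq:hydrogene1}, coercive by (H7), furnishes the independent bound $\nabla p_g^\eta\in L^2(Q_T)$ claimed in \eqref{1}. Feeding these estimates back into the equations yields the time-derivative bounds of \eqref{3}.

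Passing to the limit $\eta\to 0$ is the main obstacle. Weak limits exist in all spaces of \eqref{1}--\eqref{3}, but the products appearing in the degenerate convective fluxes must be identified. Strong convergence of $\sa_l^\eta$ is the linchpin: from $\B(\sa_l^\eta)$ bounded in $L^2(0,T;H^1(\Omega))$ and $\dt(\phii\sa_l^\eta)$ in $L^2(0,T;(H^1_{\Gamma_l})')$, an Aubin--Simon / Alt--Luckhaus lemma gives $\B(\sa_l^\eta)\to \B(\sa_l)$ in $L^2(Q_T)$, and the H\"older property of $\B^{-1}$ in (H8) upgrades this to a.e.\ convergence of $\sa_l^\eta\in[0,1]$. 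A similar compactness argument for the compound variable $\phii\rho_l^h(p_g^\eta)m(\sa_l^\eta)$, combined with strict monotonicity of $p_g\mapsto\rho_l^h(p_g)$, delivers a.e.\ convergence of $p_g^\eta$. To handle the degenerate fluxes I would not try to identify each $M_\alpha\nabla p_\alpha$ separately but instead use the exact decomposition
\begin{equation*}
\rho_l^h\bigl(M_l\nabla p_l+\ctea M_g\nabla p_g\bigr) = \rho_l^h(M_l+\ctea M_g)\nabla p + (1-\ctea)\rho_l^h\nabla\B(\sa_l),
\end{equation*}
in which every factor carries a uniform $L^2$ bound, so weak convergence of the gradients combined with the a.e.\ convergence of $\sa_l$ and $p_g$ identifies the limit by a standard weak--strong product argument. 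The initial conditions and continuity in time of the accumulation terms are recovered from the $L^2(0,T;(H^1_{\Gamma_l})')$ bound on the time derivatives by the usual density argument, yielding \eqref{eq:pg}--\eqref{eq:pl} together with the initial traces.
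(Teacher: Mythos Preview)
Your overall architecture (regularize, derive $\eta$-uniform energy estimates via the global pressure, extract compactness, pass to the limit) matches the paper's, and your limit argument---in particular the decomposition $\rho_l^h(M_l\nabla p_l+\ctea M_g\nabla p_g)=\rho_l^h(M_l+\ctea M_g)\nabla p+(1-\ctea)\rho_l^h\nabla\B(\sa_l)$---is exactly what the paper uses. Two aspects differ but are harmless: the paper regularizes by adding penalty terms $(\ctea-1)\eta\,\di(\rho_l^h\nabla(p_g-p_l))$ and $-\eta\,\di(\nabla(p_g-p_l))$ to the two equations (rather than your $M_\alpha+\eta$), and it builds the regularized solution by time semi-discretization plus a nested $\epsilon$-regularization and Leray--Schauder, rather than Faedo--Galerkin/Schauder. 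Either route works. The paper also obtains compactness via Kolmogorov space--time translates rather than Aubin--Simon, again an inessential variant.

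There is, however, a genuine gap in your energy step. Testing the water equation with $p_l^\eta$ and the hydrogen equation with a weighted $p_g^\eta$ does \emph{not} produce the quadratic form you write. After the weight $g_g'(p_g)=1/\rho_l^h(p_g)$, the hydrogen flux $\rho_l^h M_l\nabla p_l$ contributes a cross term $\int\Ka\,M_l\nabla p_l\cdot\nabla p_g$ that nothing in the water equation tested with $p_l$ cancels; the result is not coercive (Young's inequality leaves $(\ctea M_g-\tfrac12 M_l)|\nabla p_g|^2$, which changes sign where $M_g$ degenerates). The paper's key device is to test the water equation with $\ctea\,p_l-p_g$ instead of $p_l$: the extra $-p_g$ piece produces $-\int\Ka\,M_l\nabla p_l\cdot\nabla p_g$, which exactly kills the cross term, and together with the hydrogen test $g_g(p_g)=\int_0^{p_g}\rho_l^h(z)^{-1}\dd z$ the dissipative sum collapses to $\ctea\int\Ka\,(M_l|\nabla p_l|^2+M_g|\nabla p_g|^2)$, hence to the global-pressure form via \eqref{eq_magic}. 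The same pair of test functions also turns the accumulation terms into an exact time derivative $\dt\mathcal{E}$ with $\mathcal{E}=m(\sa_l)\mathcal{H}_g(p_g)-\ctea\int_0^{\sa_l}p_c(z)\,\dd z$ and $\mathcal{H}_g(p_g)=\rho_l^h(p_g)g_g(p_g)-p_g\ge 0$, which is what makes the estimate close. Your phrase ``suitably weighted'' gestures at $g_g$, but the nontrivial point is the $\ctea p_l-p_g$ test in the water equation; without it the estimate fails.
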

\begin{remarque}
  Remark that, the solutions obtained in the above theorem do not satisfy that $s_l\le 1$, then the functions depend on the saturation are extended by continuity for $s_l\ge 1$.
\end{remarque}
\section{Energy estimates}
\label{sec:energy}

The notion of weak solutions is very natural provided that we explain
the origin of the requirements \eqref{1}--\eqref{3}. In this section,
we give estimates on the gradient of the global pressure and on the
gradient of the capillary term ${\cal B}$.  In order to obtain these
estimations, we define $g_g$ and ${\cal H}_g$ by
$$
g_{g}(p_{g}):=\int_{0}^{p_{g}}\frac{1}{\rho_{l}^h(z)}\dd z \text{ and
} \mathcal{H}_{g}(p_g):=\rho_l^h(p_{g})g_{g}(p_{g})-p_{g}.
$$
The function ${\cal H}_g$ verifies
$\mathcal{H}^{\prime}_{g}(p_{g})=(\rho^{h}_{l}(p_{g}))^\prime
g_{g}(p_{g})$, $\mathcal{H}_{g}(0)=0$, $\mathcal{H}_{g}(p_{g})\geq 0$
for all $p_{g}$
and $\mathcal{H}_{g}$ is sublinear with respect to $p_g$. This kind of function is introduced in \cite{MCcras, ZS10, ZS11}. \\
By multiplying \eqref{eq:hydrogene1} by $g_{g}(p_{g})$ and
\eqref{eq:eau1} by $\ctea p_l - p_g$, after integration and summation
of equations, we deduce the equality
\begin{align}
  &\notag\int_{\Omega}\phii \left[
    \partial_{t}\Big(m(\sa_l)\rho_l^{h}(p_g)\Big)g_{g}(p_g)+\dt \sa_l
    \Big(\ctea\; p_l - p_g\Big) \right]\dd x \\ &
  \label{est:est1}\qquad +
  \int_{\Omega}^{} {\bf K} M_l(\sa_l)(\grad p_l-\rho_l(p_l){\bf
    g})\cdot \grad p_l \dd x \notag \\ & \qquad \quad+ \ctea
  \int_{\Omega}^{} {\bf
    K} M_g(\sa_g)(\grad p_g-\rho_g(p_g){\bf g})\cdot \grad p_g \dd x\\
  & \notag\qquad \quad + \int_{\Omega} \cteb \X_l^w \Dlh \nabla p_g
  \cdot \nabla p_g \dd x = \int_{\Omega}\ \Big( r_{g} g_{g}(p_g) +
  \frac{r_{\omega}}{\rho_{l}^w}(\ctea p_l-p_g)\Big)\,\dd x.
\end{align}
To treat the first term of the above equality, let
\begin{align*}
  \mathcal{M} & =  \partial_{t}\Big(m(\sa_l)\rho_l^{h}(p_g)\Big)g_{g}(p_g)+\dt \sa_l \Big(\ctea\; p_l - p_g\Big) \\
  & = \partial_{t}\Big(m(\sa_l)\rho_l^{h}(p_g)g_{g}(p_g)\Big) + \dt
  \Big( \sa_l(\ctea p_l -p_g)\Big) + \ctea \sa_l \dt(p_g-p_l) - \ctea \dt p_g\\
  & = \partial_{t}\Big(m(\sa_l)\rho_l^{h}(p_g)g_{g}(p_g)\Big) + \dt
  \Big( \sa_l(\ctea p_l -p_g)\Big) + \ctea \sa_l \dt(p_c) - \ctea \dt
  p_g.
\end{align*}
Consider $\mathcal{N}$ a primitive of $\sa_l p_c^{'}(\sa_l)$. We can write $\mathcal{M}$ as $\mathcal{M} =\dt
\mathcal{E}$ where $\mathcal{E}$ is defined by
\begin{align*}
  \mathcal{E}& = m(\sa_l)\rho_l^{h}(p_g)g_{g}(p_g) + \sa_l(\ctea p_l
  -p_g) + \ctea
  \mathcal{N}(\sa_l) - \ctea p_g\\
  & = m(\sa_l)\Big(\rho_l^{h}(p_g)g_{g}(p_g) - p_g\Big) - \ctea \sa_l
  p_c(\sa_l)+ \ctea \mathcal{N}(\sa_l).
\end{align*}
From the definition of the functions $\mathcal{H}_g$ and
$\mathcal{N}$, the expression of $\mathcal{E}$ is equivalent to
$$
\mathcal{E} = m(\sa_l)\mathcal{H}_g(p_g) - \ctea \int_{0}^{\sa_l}
p_c(z)\dd z.
$$
Integrate \eqref{est:est1} over $(0,T)$, we deduce by using the
assumptions $(\textit{H1})$-$(\textit{H8})$, the positivity of
$\mathcal{H}_{g}$ and the sub-linearity of $g_{g}(p_{g})$, that
\begin{multline}\label{estimation}
  \int_{Q_{T}}M_{l} |\nabla p_{l}|^2 \dd x \dd t+ \int_{Q_{T}}M_{g}
  |\nabla p_{g}|^2 \dd x\dd t \\ + \int_{Q_T}\cteb \X_l^w \Dlh \nabla
  p_g \cdot \nabla p_g \dd x \dd t \le C\left(1 + \|p_l\|_{L^2(Q_T)}
    +\|p_g\|_{L^2(Q_T)} \right),
\end{multline}
where $C>0$ is constant. In term of global pressure, from the relation
\eqref{form}, we have the fundamental equality
\begin{equation}\label{eq_magic}
  M |\nabla p |^{2}  +
  \frac{M_{l}M_{g}}{M}|\nabla p_c |^{2}  = M_{l} |\nabla p_{l}|^2 +    M_{g} |\nabla p_{g}|^2 ,
\end{equation}
The relation \eqref{p} between the global pressure and the pressure
of each phase prove the following inequality
\begin{align*}
  \|p_l\|_{L^2(Q_T)} +\|p_g\|_{L^2(Q_T)} & \le \|p\|_{L^2(Q_T)}
  +\|\bar{p}\|_{L^2(Q_T)} + \|\tilde{p}\|_{L^2(Q_T)} \\ & \le C
  \|\nabla p\|_{L^2(Q_T)} +\|\bar{p}\|_{L^2(Q_T)}
  +\|\tilde{p}\|_{L^2(Q_T)}.
\end{align*}
The above inequality and the equality \eqref{eq_magic} combined to the
estimate \eqref{estimation} ensures that $p,\; p_g\in
L^{2}(0,T;H_{\Gamma_l}^{1}(\Omega))$ and $\mathcal{B}(s_{l})\in
L^{2}(0,T;H^{1}(\Omega))$.
\section{Construction of a regularized system}\label{sec:regular}

Before establishing Theorem \ref{theo:existence}, we introduce the
existence of regularized solutions to system
\eqref{eq:hydrogene}--\eqref{eq:eau}. First we are interested in a
non-degenerate system by adding a dissipative term on saturation
preserving the positivity of the liquid saturation. Precisely, we
consider the non-degenerate system:
\begin{numcases}{\sys_\eta}
\begin{aligned}
  & \partial_{t}\left(\phii
    m(\sa_l^{\eta})\rho_l^{h}(p_g^{\eta})\right) -
  \di(\Ka\rho_l^{h}(p_g^{\eta})M_l(\sa_l^{\eta})\left(\nabla
    p_l^{\eta}-\rho_l(p_l^\eta)\G\right))\\ & - \ctea
  \di\left(\Ka\rho_l^{h}(p_g^\eta)M_g(\sa_g^{\eta})\left(\nabla
      p_g^{\eta}-\rho_g(p_g)\G\right)\right) -\di(\cteb (\X_l^w)^\eta
  (\Dlh)^\eta\nabla p_g) \\ & + (\ctea-1)\eta\;
  \di(\rho_l^{h}(p_g)\nabla (p_g^{\eta}-p_l^{\eta}))=r_g,
  \end{aligned}\notag  \\ 
  \begin{aligned}
    \partial_{t}\left(\phii \sa_{l}^{\eta}\right) - \di\left(\Ka
      M_l(\sa_l^{\eta})\left(\nabla
        p_l^{\eta}-\rho_l(p_l^\eta)\G\right)\right)
    -\eta\;\di(\nabla(p_g^{\eta} - p_l^{\eta}))
    =\frac{r_w}{\rho_l^{w}},\notag
  \end{aligned}
\end{numcases}
completed with the initial conditions $(\ref{cd:initiale})$, and the
following mixed boundary conditions,
\begin{equation}\label{cd:bord_eta}
  \left\{ \begin{aligned}
      & p_{g}^{\eta}(t,x) = p_{l}^{\eta}(t,x)=0  & \text{ on } (0,T)\times \Gamma_{l}  \\ &
      \big( \V_l^\eta + \ctea \V_g^\eta + \cteb \X_l^w \Dlh \nabla p_g 
      + (\ctea - 1)\eta \nabla(p_{g}^{\eta} - p_{l}^{\eta}) \big)\cdot\n = 0  & \text{ on } (0,T)\times\Gamma_{n}\\ & 
      \left(\V_l^\eta - \eta \nabla(p_{g}^{\eta} 
        - p_{l}^{\eta})\right)\cdot\n = 0  & \text{ on } (0,T)\times\Gamma_{n}
    \end{aligned} \right.
\end{equation}  
where $\textbf{n}$ is the outward normal to the boundary $\Gamma_n$
and $\V_{\alpha}^\eta = -\textbf{K} M_{\alpha}(s_{\alpha}^{\eta}) (
\nabla p_{\alpha}^{\eta} -\rho_{\alpha}(p_{\alpha}^{\eta})\G)$.

  \bigskip

  We state the existence of solutions of the above system
  $(\sys_\eta)$.
\begin{theorem}\label{theo:exi_eta} Let $\text{(H1)--(H8)}$ hold.
  Let $p_{g}^{0},p_{l}^{0}\in L^2(\Omega)$, $0\le \sa_l^0(x)\le 1$.  Then, for all $\eta>0$, there exists
  $(p_{g}^{\eta},p_{l}^{\eta})$ satisfying
  \begin{align*}
    & p_\alpha^{\eta}\in L^{2}(0,T;H^{1}_{\Gamma_l}(\Omega)), \ \phii
    \partial_{t}(\rho_l^h(p_g^{\eta})m(\sa_l^{\eta}))
    \in L^{2}(0,T;(H^{1}_{\Gamma_l}(\Omega))^{'}), \\
    & \sa_l^{\eta}\geq 0 \text{ a.e. in } Q_{T},\ \sa_l^{\eta}\in
    L^{2}(0,T;H^{1}(\Omega)), \ \phii \partial_{t}\sa_l^{\eta}\in
    L^{2}(0,T;(H^{1}_{\Gamma_l}(\Omega))^{'}), \\ &
    \rho_l^h(p_g^{\eta})m(\sa_l^{\eta})\in C^0\left([0,T];L^2(\Omega)
    \right),\sa_l^\eta\in C^0\left([0,T];L^2(\Omega) \right),
\end{align*}
such that for all $\varphi, \psi \in
L^{2}(0,T;H^{1}_{\Gamma_l}(\Omega))$
\begin{equation}\label{eq:pg_eta}
\begin{aligned}
  &\left\langle \phii
    \partial_{t}(m(\sa_l^{\eta})\rho_l^{h}(p_g^{\eta})),\varphi\right\rangle
  +\int_{Q_{T}}\Ka \rho_l^{h}(p_g^\eta)M_l(\sa_l^{\eta})\left(\nabla
    p_l^{\eta}-\rho_l(p_l^\eta)\G\right)\cdot \nabla \varphi \dd x \dd t\\
  & \quad \quad + \ctea\int_{Q_{T}}\Ka
  \rho_l^{h}(p_g^{\eta})M_g(\sa_l^{\eta}) \left(\nabla p_g^{\eta}
    -\rho_g(p_g^\eta)\G\right)\cdot \nabla \varphi \dd x \dd t \\ & \quad
  \quad \quad + \int_{Q_{T}}\cteb \Xlweta (\Dlh)^\eta \nabla p_g^\eta \cdot \nabla
  \varphi \dd x \dd t + (\ctea -
  1)\eta\int_{Q_{T}}\rho_l^{h}(p_g^{\eta})\nabla (p_g^{\eta} -
  p_l^{\eta}) \cdot \nabla \varphi \dd x\dd t \\ & \quad \quad \quad
  \quad \quad= \int_{Q_{T}}r_{g}\varphi \dd x \dd t,
\end{aligned}
\end{equation}
\begin{equation}\label{eq:pl_eta}
\begin{aligned}
  & \left\langle\phii \partial_{t}\sa_l^{\eta},\psi\right\rangle +
  \int_{Q_{T}}\Ka M_{l}(\sa_l^{\eta}) \left(\nabla p_l^{\eta}
    -\rho_l(p_l^\eta)\G\right) \cdot \nabla \psi \dd x \dd t \\ & \quad
  \quad \quad \quad\quad \quad - \eta\int_{Q_{T}}\nabla (p_g^{\eta} -
  p_l^{\eta}) \cdot \nabla \psi \dd x\dd t =
  \int_{Q_{T}}\frac{r_{\omega}}{\rho_l^w}\psi \dd x \dd t,
\end{aligned}
\end{equation}
where the bracket $\left\langle .,.\right\rangle$ is the duality
product between $L^{2}(0,T;(H^{1}_{\Gamma_{l}}(\Omega))^{'})$ and
$L^{2}(0,T;H^{1}_{\Gamma_{l}}(\Omega))$.\\
\end{theorem}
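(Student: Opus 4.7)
The plan is to construct solutions of the regularized system $(\sys_\eta)$ by a Faedo-Galerkin approximation in space and to pass to the limit on the Galerkin dimension $N$ using the energy analysis of Section \ref{sec:energy} adapted to the non-degenerate setting together with an Aubin-Lions compactness argument.

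First I would fix a Hilbert basis $(w_i)_{i\geq 1}$ of $H^1_{\Gamma_l}(\Omega)$, orthonormal in $L^2(\Omega)$, set $V_N=\mathrm{span}(w_1,\dots,w_N)$, and look for
$$
p_g^{\eta,N}(t,x)=\sum_{i=1}^{N}\alpha_i^N(t)w_i(x),\qquad p_l^{\eta,N}(t,x)=\sum_{i=1}^{N}\beta_i^N(t)w_i(x),
$$
together with the capillary closure $s_l^{\eta,N}=p_c^{-1}(p_g^{\eta,N}-p_l^{\eta,N})$, where $p_c^{-1}$ is Lipschitz of constant $1/\underline{p_c}$ by (H5) and the mobilities $M_\alpha$ are continuously extended outside $[0,1]$. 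Projecting \eqref{eq:pg_eta}--\eqref{eq:pl_eta} onto $V_N$ gives a finite-dimensional system whose principal part is the mass matrix of $V_N$ weighted by bounded coefficients bounded away from zero, so the Cauchy-Lipschitz theorem provides a local-in-time Galerkin solution.

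Next I would derive a priori bounds uniform in $N$ by reproducing on the Galerkin system the computation of Section \ref{sec:energy}: test the discrete gas equation with $g_g(p_g^{\eta,N})$ and the discrete liquid equation with $\ctea p_l^{\eta,N}-p_g^{\eta,N}$, sum, and integrate in time. The novel contribution from the $\eta$-regularization is
$$
\eta\int_{Q_T}\bigl(1+(\ctea-1)\rho_l^h(p_g^{\eta,N})\bigr)|\grad(p_g^{\eta,N}-p_l^{\eta,N})|^2\,\dd x\,\dd t,
$$
which, using $\rho_l^h\geq\rho_m$ from (H4), controls $\grad(p_g^{\eta,N}-p_l^{\eta,N})$ in $L^2(Q_T)$. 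Combined with \eqref{estimation} and the identity \eqref{eq_magic}, this upgrades the degenerate bound $\sqrt{M_\alpha}\grad p_\alpha^{\eta,N}\in L^2(Q_T)$ to a full bound $p_\alpha^{\eta,N}\in L^2(0,T;H^1_{\Gamma_l}(\Omega))$, with constants depending on $1/\eta$ but uniform in $N$; since $\grad(p_g^{\eta,N}-p_l^{\eta,N})=p_c'(s_l^{\eta,N})\grad s_l^{\eta,N}$ with $|p_c'|\geq\underline{p_c}$, it also gives $s_l^{\eta,N}\in L^2(0,T;H^1(\Omega))$, thereby extending the Galerkin solution to $[0,T]$. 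Positivity of $s_l^{\eta,N}$ is then obtained by testing the saturation equation with an approximation in $V_N$ of $-(s_l^{\eta,N})^-$ and using $M_l\equiv 0$ for $s_l\leq 0$ in the extension.

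The Galerkin equations then yield, by duality, uniform bounds for $\phii\dt(\rho_l^h(p_g^{\eta,N})m(s_l^{\eta,N}))$ and $\phii\dt s_l^{\eta,N}$ in $L^2(0,T;(H^1_{\Gamma_l}(\Omega))')$, so the Aubin-Lions lemma provides strong $L^2(Q_T)$ convergence of $s_l^{\eta,N}$ and of $\rho_l^h(p_g^{\eta,N})m(s_l^{\eta,N})$; since the continuous extension of $m$ satisfies $m\geq\min(1,\ctea)=1>0$, we recover a.e. convergence of $\rho_l^h(p_g^{\eta,N})$ and hence of $p_g^{\eta,N}$ from the strict monotonicity of $\rho_l^h$ in (H4). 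The main obstacle is the passage to the limit in the coupled nonlinear fluxes such as $\rho_l^h(p_g^{\eta,N})M_\alpha(s_\alpha^{\eta,N})\grad p_\alpha^{\eta,N}$: weak $L^2(Q_T)$ convergence of the gradient alone is insufficient, and one must combine it with strong $L^2(Q_T)$ convergence of $M_\alpha(s_\alpha^{\eta,N})$ (from continuity and the strong convergence of the saturation), with the a.e. convergence of $\rho_l^h(p_g^{\eta,N})$ obtained above, and with the uniform $L^\infty$ bounds of (H3)-(H4) to invoke dominated convergence. Finally, the time continuity of $\rho_l^h(p_g^\eta)m(s_l^\eta)$ and $s_l^\eta$ in $L^2(\Omega)$, and hence the initial conditions, follow from the $H^1$-dual control of their time derivatives.
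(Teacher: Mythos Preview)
Your Faedo--Galerkin approach is a genuinely different route from the paper's, which proceeds instead by a Rothe-type time discretization combined with a Leray--Schauder fixed-point argument at each time step, following Alt--Luckhaus. The paper first regularizes the mobilities ($M_\alpha^\epsilon = M_\alpha + \epsilon$) so that each discrete-time problem is a uniformly elliptic system in the full space $H^1_{\Gamma_l}(\Omega)$, solves it by a fixed-point argument, then lets $\epsilon \to 0$ (recovering $s_l\ge 0$ at this stage by testing with $(s_l)^-$), and finally sends the time step $\delta t \to 0$ using the discrete energy inequality of Lemma~\ref{lem1}.

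Your proposal, however, has a genuine gap at the a priori estimate step. In a Galerkin scheme the projected equations hold only against test functions in $V_N$, and $g_g(p_g^{\eta,N}) = \int_0^{p_g^{\eta,N}} \frac{\dd z}{\rho_l^h(z)}$ is a nonlinear function of $p_g^{\eta,N}$ that does \emph{not} lie in $V_N$ unless $\rho_l^h$ is constant. The entire energy structure of Section~\ref{sec:energy} hinges on this particular nonlinear test---it is precisely what turns the evolution term into the exact time derivative of $m(s_l)\mathcal{H}_g(p_g) - \ctea\int_0^{s_l}p_c$---so you cannot reproduce \eqref{estimation} at the Galerkin level, and no alternative linear test in $(p_g^{\eta,N},p_l^{\eta,N})$ produces a closed energy for the coupled time-derivative terms. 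Without such a bound you cannot extend the local ODE solution to $[0,T]$ or extract any compactness as $N\to\infty$. The same obstruction hits your positivity argument: $(s_l^{\eta,N})^-$ is not in $V_N$ either, and replacing it by a $V_N$-approximation destroys the pointwise cancellation $M_l(s_l)(s_l)^- = 0$ on which the argument rests. These difficulties are exactly why the paper (and the compressible two-phase references it cites) discretizes in time rather than in space: at each time step the elliptic problem is posed in the full space $H^1_{\Gamma_l}(\Omega)$, so nonlinear functions of the unknown are admissible test functions and both the energy inequality and the maximum-principle argument go through.
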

For the sake of clarity, we omit the index $\eta$ in the problem
$(\mathfrak{P}_{\eta})$. The sequel of this section is devoted to the
proof of Theorem \ref{theo:exi_eta}. The existence of solution of
non-degenerated model $(\mathfrak{P}_\eta)$ is splitted in three
steps. The first one based on approached solutions solving a time
discrete system with non-degenerate mobilities.
%
For that, let be $T>0$, $M\in \N^{*}$ a number of time step, $\h=T/M$
the time step and let initialize a sequence parametrized by $\h$ with
the initial condition $p_\alpha^0$.
Then, if we consider $(p^{n,\epsilon}_g,p^{n, \epsilon}_l)\in
(L^{2}(\Omega))^{2}$ with
$\rho_l^h(p^{n,\epsilon}_g)m(s^{n,\epsilon}_l) \geq 0$ and
$s^{n,\epsilon}_l\geq 0$ at time $t_n=n\h$, we are searching for a
solution $(p^{n+1, \epsilon}_g,p^{n+1,\epsilon}_l)$ of the following system
\begin{numcases}{\sys_{\eta,\h}^\epsilon}
  \begin{aligned}
    & \phii
    \frac{m(Z(\sa^{n+1,\epsilon}_l))\rho_{l}^{h}(p^{n+1,\epsilon}_g) -
      m(\sa^{n,\epsilon}_l)\rho_{l}^{h}(p^{n,\epsilon}_g)}{\h} \\
    &\quad -
    \di\Big(\Ka\rho_{l}^{h}(p^{n+1,\epsilon}_{g})\big(M_{l}^\epsilon(\sa^{n+1,\epsilon}_l)
    \nabla p^{n+1,\epsilon}_l -
    M_{l}(\sa^{n+1,\epsilon}_l)\rho_l(p^{n+1,\epsilon}_l){\G}\big)\Big)
    \\ & \quad\quad- \ctea\;
    \di\Big(\Ka\rho_{l}^{h}(p^{n+1,\epsilon}_g)\big(M_{g}^\epsilon(\sa^{n+1,\epsilon}_g)\nabla
    p^{n+1,\epsilon}_g-M_{g}(\sa^{n+1,\epsilon}_g)\rho_g(p^{n+1,\epsilon}_g){\G}\big)\Big)
    \\ & \quad\quad \quad+ (\ctea - 1)\eta\;
    \di\Big(\rho_{l}^{h}(p^{n+1,\epsilon}_g)\nabla
    (p^{n+1,\epsilon}_g-p^{n+1,\epsilon}_l)\Big)\\ &
    \quad\quad\quad\quad -\di\Big(\cteb\Xlwetan \Dlh \nabla
    p^{n+1,\epsilon}_g\Big) =r^{n+1}_g,
\end{aligned} & \label{sy\sa_pg_h} \\
\begin{aligned}
  & \phii \frac{Z(\sa^{n+1,\epsilon}_l) - \sa_l^{n,\epsilon}}{\h} -
  \di\Big(\Ka \big( M_{l}^\epsilon(\sa^{n+1,\epsilon}_l)\nabla
  p^{n+1,\epsilon}_l-
  M_{l}(\sa^{n+1,\epsilon}_l)\rho_l(p_l^{n+1,\epsilon}){\G}\big)\Big)
  \\ & \quad \quad \quad\quad \quad \quad \quad \quad\quad \quad
  -\eta\; \di\Big(\nabla(p^{n+1,\epsilon}_g - p^{n+1,\epsilon}_l)\Big)
  =\frac{r_w^{n+1}}{\rho_l^{w}},
\end{aligned} & \label{sy\sa_pl_h}    
\end{numcases}
where $M_\alpha^\epsilon = M_\alpha + \epsilon$, with $\epsilon >0$,
with the boundary conditions \eqref{cd:bord_eta}. The regularization
of the mobilities lead to the loss of the positivity on the liquid
saturation. So, the functions $M_\alpha$ and $Z$ are extended on $\R$
by continuity outside $[0,1]$.

This technique of semi-discretization method in time has been used by
Alt and Luckhaus \cite{Alt83} for degenerate parabolic system and has
been employed in \cite{CS07,ZS10,CS10} for a porous medium. A Leray-Schauder's fixed
point theorem \cite{Zeidler} allows to define a solution
$(p_g^{n+1,\epsilon},p_l^{n+1,\epsilon})$ for the system
$\sys_{\eta,\h}^\epsilon$.

\bigskip

The second step is devoted to pass to the limit when $\epsilon$ goes
to zero and to prove the positivity of the liquid pressure. A uniform
estimate (with respect to $\epsilon$) based on the scalar product of
\eqref{sys_pg_h} with $g_{g}(p_{g}) :=
\int_{0}^{p_{g}}\frac{1}{\rho_{l}^{h}(z)}\dd z$ and \eqref{sys_pl_h}
with $\psi = \ctea p_l-p_g$ ensures by using \eqref{p} and
\eqref{eq_magic} that
  \begin{align*}
    &(p^{\epsilon})_{\epsilon}, (p_{\alpha}^{\epsilon})_{\epsilon}               &&\text{is uniformly bounded in $H^{1}_{\Gamma_{l}}(\Omega)$},\\
    &(\B(\sa_{l}^{\epsilon}))_{\epsilon}    &&\text{is uniformly bounded in $H^{1}(\Omega)$},\\
    &(\nabla p_c(\sa_{l}^{\epsilon}))_{\epsilon} &&\text{is uniformly
      bounded in $L^{2}(\Omega)$}.
    \end{align*}
     Up to a subsequence, the sequences
     $(s_{\alpha}^{\epsilon})_{\epsilon},(p^{\epsilon})_{\epsilon},(p_{\alpha}^{\epsilon})_{\epsilon}$,
     verify the following convergences
 \begin{align*}
   &   p^{\epsilon} {\longrightarrow} p  \text{ and } p_{\alpha}^{\epsilon} \longrightarrow p_{\alpha}                                 & &   \quad  \text{weakly in $H^{1}_{\Gamma_l}(\Omega)$ and a.e. in $\Omega$},\\
   &   \mathcal{B}(s_{l}^{\epsilon}){\longrightarrow} \mathcal{B}(s_{l})           &&   \quad  \text{weakly in $H^{1}(\Omega)$ and a.e.  in $\Omega$},\\
   &   Z(s_{l}^{\epsilon}) \longrightarrow Z(s_{l})                  & &   \quad  \text{strongly in $L^{2}(\Omega)$ and a.e. in $\Omega$}.
 \end{align*}
 Then, pass to the limit as $\epsilon$ goes to zero in formulations
 \eqref{sys_pg_h}-\eqref{sys_pl_h} to obtain $(p_g,p_l)\in
 H^1_{\Gamma_1}(\Omega)\times H^1_{\Gamma_1}(\Omega)$ solution of
\begin{equation}\label{pg_hh}
\begin{aligned}
  &\int_{\Omega}\frac{m(Z(\sa_l))\rho_l^{h}(p_g) -
    \rho^{*}m(\sa_l^{*})}{\h}\varphi \dd x  + \int_{\Omega}\cteb
  \X_l^{w} \Dlh \nabla p_g \cdot \nabla \varphi \dd x \\ &
  +\int_{\Omega}\Ka \rho_l^{h}(p_g)M_l(\sa_l)\left(\nabla
    p_l-\rho_l(p_l)\G\right)\cdot \nabla \varphi \dd x \\ & \quad
  \quad \quad \quad + \ctea \int_{\Omega}\Ka \rho_l^{h}(p_g)
  M_g(\sa_g)\left( \nabla p_g -\rho_g(p_g)\G\right)\cdot \nabla
  \varphi \dd x \\ & \quad \quad \quad \quad \quad \quad +
  (\ctea - 1)\eta\int_{\Omega}\rho_l^{h}(p_g)\nabla (p_g - p_l) \cdot
  \nabla \varphi \dd x = \int_{\Omega}r_{g}\varphi \dd x ,
\end{aligned}
\end{equation}
\begin{equation}\label{pl_hh}
\begin{aligned}
  &\int_{\Omega}\frac{Z(\sa_{l}) - \sa_l^{*}}{\h}\psi\dd x  +
  \int_{\Omega}\Ka M_{l}(\sa_l) \left( \nabla p_l-\rho_l(p_l)\G\right)
  \cdot \nabla \psi \dd x \\\ & \quad \quad \quad \quad \quad\quad
  \quad \quad \quad \quad - \eta\int_{\Omega}\nabla (p_g - p_l) \cdot
  \nabla \psi \dd x =
  \int_{\Omega}\frac{r_{\omega}}{\rho_l^w}\psi \dd x,
\end{aligned}  
\end{equation}
pour tout $\left(\varphi, \psi\right) \in
(H^{1}_{\Gamma_l}(\Omega))^2$.\\

To prove, that the liquid saturation is positive, we consider $\psi =
(\sa_l)^-$ in \eqref{pl_hh} and according to the extension of the
mobility of each phase ($M_l(s_l)s_l^{-}=0$) and 
the fact that ($Z(s_l)s_l^{-}=0$), we deduce that $s_l\ge 0$.\\


The third step is devoted to pass to the limit as $\h$ goes to zero to
prove the existence of a solution of the problem
$(\mathfrak{P}_\eta)$. For this, we will show some uniform estimates
with respect to $\h$ to obtain uniformly bounded on some quantities.

The next lemma gives us some uniform estimates with respect to $\h$.
\begin{lemme}\label{lem1}
  $\text{(Uniform estimates with respect to $\h$)}$ The solution of
  $(\ref{sy\sa_pg_h})-(\ref{sy\sa_pl_h})$ satisfies
  \begin{equation}\label{estimation_inde_h}
  \begin{aligned}
    &\frac{1}{\h}\int_{\Omega}\phii \left(\
      m(\sa^{n+1}_{l})\mathcal{H}_g(p^{n+1}_{g}) -
      m(\sa^{n}_{l})\mathcal{H}_g(p^{n}_{g})\right)\dd x -
    \frac{1}{\h} \int_{\Omega} \phii \left( \pc(\sa^{n+1}_{l}) -
      \pc(\sa^{n}_{l}) \right)\ \dd x \\ & \quad \quad + \ctea
    k_0\int_{\Omega} M_l(\sa^{n+1}_{l})\nabla p^{n+1}_{l} \cdot \nabla
    p^{n+1}_{l} \dd x + \ctea k_0\int_{\Omega}
    M_g(\sa^{n+1}_{g})\nabla p^{n+1}_{g} \cdot \nabla p^{n+1}_{g} \dd
    x \\ & \quad \quad \quad+ C_1 \int_{\Omega} \nabla p^{n+1}_{g}
    \cdot \nabla p^{n+1}_{g} \dd x + \ctea
    \eta\int_{\Omega}|\nabla (p_{g}^{n+1} - p_{l}^{n+1})|^2\dd x\\
    & \quad \quad \quad \quad \leq C_2 \left(\| r_g^{n+1}
      \|^2_{L^2(\Omega)} + \| r_\omega^{n+1} \|^2_{L^2(\Omega)}
    \right),
\end{aligned}
\end{equation}
where C does not depend on $\h$. The functions ${\cal H}$ and $\pc$
are defined by 
$$
\mathcal{H}_{g}(p_{g}) := \rho_{l}^{h}(p_{g})g_{g}(p_{g}) - p_{g}, \;
\pc(\sa_l) := \int_{0}^{\sa_l}p_c(z)\dd z \text{ et } g_{g}(p_{g}) =
\int_{0}^{p_{g}}\frac{1}{\rho_{l}^{h}(z)}\dd z.
$$
\end{lemme}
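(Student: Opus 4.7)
The plan is to transport the continuous energy identity of Section~\ref{sec:energy} to the time-discrete level. I would take $\varphi = g_g(p_g^{n+1})$ as test function in the discrete hydrogen equation and $\psi = \ctea p_l^{n+1} - p_g^{n+1}$ in the discrete water equation, multiply each by $\h$, and add the two. The algebraic reason this choice works is that $g_g'(p_g) = 1/\rho_l^h(p_g)$ exactly cancels the density weighting in the Darcy fluxes, so after integration by parts the two cross terms $\int\Ka M_l(\nabla p_l - \rho_l\G)\cdot\nabla p_g$ produced by the two equations cancel, leaving the coercive contributions $\ctea\int\Ka M_l(\nabla p_l - \rho_l\G)\cdot\nabla p_l$ and $\ctea\int\Ka M_g(\nabla p_g - \rho_g\G)\cdot\nabla p_g$; the diffusion-dispersion term becomes $\cteb\int\X_l^w\Dlh\nabla p_g\cdot\nabla p_g$; and the two $\eta$-regularization terms combine into $\ctea\eta\int|\nabla(p_g^{n+1} - p_l^{n+1})|^2$, as was already noted at the continuous level.

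The heart of the matter, and the main obstacle, is to control from below the discrete time-derivative contribution by a discrete energy increment, i.e.\ to establish the inequality
\begin{align*}
& \bigl[m(s_l^{n+1})\rho_l^h(p_g^{n+1}) - m(s_l^n)\rho_l^h(p_g^n)\bigr] g_g(p_g^{n+1}) + (s_l^{n+1} - s_l^n)(\ctea p_l^{n+1} - p_g^{n+1}) \\
& \qquad \geq m(s_l^{n+1})\mathcal{H}_g(p_g^{n+1}) - m(s_l^n)\mathcal{H}_g(p_g^n) - \bigl[\pc(s_l^{n+1}) - \pc(s_l^n)\bigr],
\end{align*}
which is the discrete analogue of the chain-rule identity $\mathcal{M} = \partial_t \mathcal{E}$ of Section~\ref{sec:energy}. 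I would prove it by splitting
\begin{align*}
m(s_l^{n+1})\rho_l^h(p_g^{n+1}) - m(s_l^n)\rho_l^h(p_g^n) = m(s_l^{n+1})\bigl[\rho_l^h(p_g^{n+1}) - \rho_l^h(p_g^n)\bigr] + \bigl[m(s_l^{n+1}) - m(s_l^n)\bigr]\rho_l^h(p_g^n)
\end{align*}
and then invoking two convexity ingredients: first, the convexity of $p_g \mapsto \mathcal{H}_g(p_g)$, which follows from $\mathcal{H}_g'(p_g) = (\rho_l^h)'(p_g) g_g(p_g)$ together with the monotonicity of $\rho_l^h$ from (H4), to telescope the first summand into an $m$-weighted increment of $\mathcal{H}_g$; second, the monotonicity of $p_c$ from (H5) together with the identity $\ctea p_l - p_g = -\ctea p_c(s_l) + (\ctea - 1) p_g$, so that the coupling between saturation and pressure increments produces exactly the increment of $\pc(s_l) = \int_0^{s_l} p_c(z)\,dz$.

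Once this identity is in hand, the remaining steps are routine. The gravity contributions $\int M_\alpha \rho_\alpha(p_\alpha)\G\cdot\nabla p_\alpha$ are absorbed into the coercive terms $M_\alpha|\nabla p_\alpha|^2$ by Young's inequality and the bound $\rho_\alpha \leq \rho_M$ from (H4); hypotheses (H2) and (H7) provide the constants $k_0$ and $\cteb d^*$ in front of the positive quadratic terms on the left-hand side. The source integral $\int r_g^{n+1} g_g(p_g^{n+1}) + (r_w^{n+1}/\rho_l^w)(\ctea p_l^{n+1} - p_g^{n+1})$ is controlled by Young's inequality using the sublinearity of $g_g$ and the $L^2$ bounds on $p_g^{n+1}$ and $p_l^{n+1}$ (themselves obtained by a global-pressure argument as in \eqref{eq_magic} combined with Poincaré on $H^1_{\Gamma_l}$), yielding the right-hand side of \eqref{estimation_inde_h} with a constant $C_2$ independent of $\h$.
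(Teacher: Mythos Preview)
Your overall strategy---test functions $g_g(p_g^{n+1})$ and $\ctea p_l^{n+1}-p_g^{n+1}$, coercivity of the flux terms, combination of the $\eta$-terms---is exactly the paper's, and the identification of the discrete energy inequality as the heart of the matter is correct. However, the execution of that inequality has two gaps.

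First, the convexity claim is wrong as stated. You invoke ``convexity of $p_g\mapsto\mathcal{H}_g(p_g)$'' and say it follows from $\mathcal{H}_g'=(\rho_l^h)'g_g$ and monotonicity of $\rho_l^h$. But $\mathcal{H}_g''=(\rho_l^h)''g_g+(\rho_l^h)'/\rho_l^h$, and (H4) says nothing about the sign of $(\rho_l^h)''$, while $g_g$ changes sign. What is actually true, and what the paper uses, is the \emph{concavity of $g_g$} (since $g_g''=-(\rho_l^h)'/(\rho_l^h)^2\le0$). Equivalently, $\mathcal{H}_g$ is convex as a function of the density $\rho=\rho_l^h(p_g)$, not of $p_g$; this does give $[\rho_l^h(p_g)-\rho_l^h(p_g^*)]\,g_g(p_g)\ge\mathcal{H}_g(p_g)-\mathcal{H}_g(p_g^*)$.

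Second, your proposed splitting of $m(s_l^{n+1})\rho_l^h(p_g^{n+1})-m(s_l^n)\rho_l^h(p_g^n)$ leaves the cross term $[m(s_l^{n+1})-m(s_l^n)]\rho_l^h(p_g^n)g_g(p_g^{n+1})$, which after subtracting $[m(s_l^{n+1})-m(s_l^n)]\mathcal{H}_g(p_g^n)$ produces $(1-\ctea)(s_l^{n+1}-s_l^n)\,\rho_l^h(p_g^n)[g_g(p_g^{n+1})-g_g(p_g^n)]$, a term with no controllable sign. The paper avoids this entirely: it does \emph{not} split, but applies the concavity of $g_g$ directly to the single term $-m(s_l^n)\rho_l^h(p_g^n)g_g(p_g^{n+1})$, obtaining $-m(s_l^n)\rho_l^h(p_g^n)g_g(p_g^{n+1})\ge -m(s_l^n)[\mathcal{H}_g(p_g^n)+p_g^{n+1}]$. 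The remaining pressure terms then combine with $(s_l^{n+1}-s_l^n)(\ctea p_l^{n+1}-p_g^{n+1})$ to give exactly $-\ctea(s_l^{n+1}-s_l^n)p_c(s_l^{n+1})$, and concavity of $\pc$ (i.e.\ $p_c$ decreasing) finishes the job. Once you replace your splitting by this one-step concavity argument, the rest of your sketch goes through unchanged.
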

\begin{proof} Let forget the exponent $n+1$ in this proof and let denote
  with the exponent $*$ the physical quantities at time $t_n$.  So,
  since $g_{g}$ is concave $(g^{''}_g(p)\leq 0)$, we have
$$
g_{g}(p_{g}) \leq g_{g}(p^{*}_{g}) + g^{'}_{g}(p^{*}_{g})(p_{g} -
p^{*}_{g}),
$$
and from the definition of $\mathcal{H}_{g}$, one gets
  \begin{equation}\label{00}
    \begin{aligned}
      \left( \rho_{l}^{h}(p_{g})m(\sa_{l}) -
        \rho_{l}^{h}(p^{*}_{g})m(\sa^{*}_{l}) \right)g_{g}(p_{g}) +
      \left(\sa_{l} - \sa_l^{*}\right)(\ctea p_l-p_g) \\ \geq
      \mathcal{H}_{g}(p_{g})m(\sa_{l}) -
      \mathcal{H}_{g}(p^{*}_{g})m(\sa^{*}_{l}) - \ctea(s_{l} -
      s^{*}_{l})p_c(s_{l}).
    \end{aligned}
  \end{equation} 
  Using the concavity of $ \pc$ we have the inequality : $(s_{l} -
  s^{*}_{l})p_c(s_{l})\leq \pc(s_{l}) - \pc(s^{*}_{l})$, and the above
  inequality $(\ref{00})$, we obtain
  the following inequality
\begin{equation}\label{132}
    \begin{aligned}
      \left( \rho_{l}^{h}(p_{g})m(\sa_{l}) -
        \rho_{l}^{h}(p^{*}_{g})m(\sa^{*}_{l}) \right)g_{g}(p_{g}) +
      \left(\sa_{l} - \sa_l^{*}\right)(\ctea p_l-p_g) \\ \geq
      \mathcal{H}_{g}(p_{g})m(\sa_{l}) -
      \mathcal{H}_{g}(p^{*}_{g})m(\sa^{*}_{l}) - \ctea \pc(\sa_{l}) +
      \ctea \pc(\sa^{*}_{l}).
    \end{aligned}
  \end{equation} 
  Now, to obtain the inequality $(\ref{estimation_inde_h})$, we just
  have to multiply \eqref{pg_hh} by $g_{g}(p^{n+1}_{g})$ and
  \eqref{pl_hh} by $(\ctea p^{n+1}_{l} - p^{n+1}_{g})$, sum this two
  equations and use the inequality \eqref{132}.
\end{proof}

The limit as $\h$ goes to zero is similar to the limit as
$\epsilon$ goes to zero with additional difficulties on time
derivative terms which are overcome in the same manner as in
\cite{CS07,ZS10,CS10}.

\section{Existence of solutions of the degenerate system}

We have shown in the previous section \ref{sec:regular}, the existence
of a solution $(p_g^\eta,p_l^\eta)$ of the problem $\sys_\eta$. The
aim of this section is to pass to the limit as $\eta$ goes to the zero
to prove the main result of this paper. 

The first point to do this is to obtain uniform energy estimates with respect
to $\eta$. 
The second point is devoted to gat uniform estimates o, space and time translates which provide compactness results on solution by virtue of Kolmogorov's theorem. 
Next, 
we will be able to pass to the limit as $\eta$ goes to zero. \\

Now, we state the following two lemmas in order to establish uniform estimates with
respect to $\eta$.

\begin{lemme}\label{16}
  The sequences $(\sa^{\eta}_\alpha)_{\eta}$ and $(p^{\eta})_{\eta}$ satisfy
\begin{align} 
  &\sa^{\eta}_{l}\geq 0                                 &&     \text{ almost everywhere  in } Q_{T}, \label{1eta}\\
  &(p^{\eta})_{\eta},\; (p_g^\eta)_\eta                                         &&    \text{ is uniformly bounded in $L^{2}(0,T;H^{1}_{\Gamma_{l}}(\Omega))$}, \label{3eta}\\
  &(\sqrt{\eta}\  \nabla p_c(\sa^{\eta}_{l}))_{\eta}              &&    \text{ is uniformly bounded in  $L^{2}(Q_{T})$} ,\label{4eta}\\
  &(\sqrt{M_\alpha(\sa^{\eta}_\alpha)}\ \nabla p^{\eta}_\alpha)_{\eta}    &&    \text{ is uniformly bounded in  $L^{2}(Q_{T})$} ,\ \alpha=l,g\label{5eta}\\
  &(\B(\sa^{\eta}_{l}))_{\eta}                                  &&    \text{ is uniformly bounded in  $L^{2}(0,T;H^{1}(\Omega))$}, \label{7eta}\\
  &(\phii \partial_{t}(\rho_{l}^{h}(p^{\eta}_{g})m(\sa^{\eta}_{l})))_{\eta} &&     \text{ is uniformly bounded in  $L^{2}(0,T;(H^{1}_{\Gamma_{l}}(\Omega))^{'})$}, \label{8eta}\\
  &(\phii \partial_{t}(\sa^{\eta}_{l}))_{\eta} && \text{ is uniformly
    bounded in $L^{2}(0,T;(H^{1}_{\Gamma_{l}}(\Omega))^{'})$}.
  \label{9eta}
\end{align}
\end{lemme}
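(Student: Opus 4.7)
The plan is to derive the uniform bounds by mimicking the formal energy computation of Section~\ref{sec:energy} at the level of the regularized system $\sys_\eta$, exploiting the fact that the additional $\eta$-terms in \eqref{eq:pg_eta}--\eqref{eq:pl_eta} combine into a dissipative contribution that controls $\nabla p_c(\sa_l^\eta)$.

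\textbf{Step 1 (positivity).} The nonnegativity $\sa_l^\eta\ge 0$ a.e.\ in $Q_T$ is inherited from the semi-discrete level: it has already been obtained in the second construction step by testing the discrete equation with the negative part $(\sa_l)^-$, and this property is preserved through the limits $\epsilon\to 0$ and $\h\to 0$ by the a.e.\ convergence of $\sa_l$.

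\textbf{Step 2 (key energy identity).} I would test \eqref{eq:pg_eta} with $\varphi=g_g(p_g^\eta)$ and \eqref{eq:pl_eta} with $\psi=\ctea p_l^\eta-p_g^\eta$, then add the two identities and integrate over $(0,T)$. The time-derivative terms rearrange exactly as in Section~\ref{sec:energy}: denoting by $\mathcal{E}=m(\sa_l)\mathcal{H}_g(p_g)-\ctea\int_0^{\sa_l}p_c(z)\,dz$, the sum of the two duality brackets produces $\int_\Omega\phii\,\mathcal{E}(T)\,dx-\int_\Omega\phii\,\mathcal{E}(0)\,dx$, which is bounded below by $-C$ using the sublinearity of $g_g$, the bound $0\le\sa_l^0\le 1$ and $p_g^0\in L^2(\Omega)$. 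The Darcy terms yield $\int_{Q_T}\Ka M_l(\sa_l^\eta)\nabla p_l^\eta\cdot\nabla p_l^\eta + \ctea \int_{Q_T}\Ka M_g(\sa_g^\eta)\nabla p_g^\eta\cdot\nabla p_g^\eta$ (up to gravity terms absorbed by Young's inequality) and the diffusion term gives $\int_{Q_T}\cteb(\X_l^w)^\eta(\Dlh)^\eta\nabla p_g^\eta\cdot\nabla p_g^\eta$, bounded below by $d^\ast\cteb\underline{X}\|\nabla p_g^\eta\|_{L^2}^2$ after checking the extension of $\X_l^w$ keeps a positive lower bound on the support of $p_g^\eta$.

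\textbf{Step 3 (handling the $\eta$-regularization).} The crucial computation is that testing the extra term $(\ctea-1)\eta\,\rho_l^h(p_g^\eta)\nabla(p_g^\eta-p_l^\eta)$ with $g_g(p_g^\eta)$ produces, using $\nabla g_g(p_g^\eta)=\nabla p_g^\eta/\rho_l^h(p_g^\eta)$, exactly $(\ctea-1)\eta\int_{Q_T}\nabla(p_g^\eta-p_l^\eta)\cdot\nabla p_g^\eta$, while testing $-\eta\nabla(p_g^\eta-p_l^\eta)$ against $\ctea p_l^\eta-p_g^\eta$ gives $\eta\int_{Q_T}\nabla(p_g^\eta-p_l^\eta)\cdot(\nabla p_g^\eta-\ctea\nabla p_l^\eta)$. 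Summing, the cross terms telescope and one obtains
\begin{equation*}
\ctea\,\eta\int_{Q_T}|\nabla(p_g^\eta-p_l^\eta)|^2\,dx\,dt=\ctea\,\eta\int_{Q_T}|\nabla p_c(\sa_l^\eta)|^2\,dx\,dt,
\end{equation*}
which is the dissipation underlying \eqref{4eta}.

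\textbf{Step 4 (closing the estimates).} The right-hand side contributes $\int_{Q_T}(r_g g_g(p_g^\eta)+(r_\omega/\rho_l^w)(\ctea p_l^\eta-p_g^\eta))\,dx\,dt$, which by the sublinearity of $g_g$, assumption (H6), Cauchy--Schwarz and Poincaré on $H^1_{\Gamma_l}(\Omega)$ is bounded by $C(1+\|\nabla p_l^\eta\|_{L^2(Q_T)}+\|\nabla p_g^\eta\|_{L^2(Q_T)})$; absorbing these terms via Young's inequality into the left-hand side gives uniform bounds on $\sqrt{M_\alpha(\sa_\alpha^\eta)}\nabla p_\alpha^\eta$, on $\nabla p_g^\eta$ (thanks to the $d^\ast$-elliptic diffusion term), and on $\sqrt{\eta}\,\nabla p_c(\sa_l^\eta)$, proving \eqref{4eta}--\eqref{5eta}. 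Applying \eqref{eq_magic} and \eqref{p}, the control of $M_l|\nabla p_l|^2+M_g|\nabla p_g|^2$ yields $\|\nabla p^\eta\|_{L^2(Q_T)}+\|\nabla\mathcal{B}(\sa_l^\eta)\|_{L^2(Q_T)}\le C$, and then Poincaré gives \eqref{3eta} and \eqref{7eta}.

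\textbf{Step 5 (time derivatives).} For \eqref{8eta}--\eqref{9eta}, I would read off $\phii\partial_t(\rho_l^h(p_g^\eta)m(\sa_l^\eta))$ and $\phii\partial_t\sa_l^\eta$ directly from \eqref{eq:pg_eta}--\eqref{eq:pl_eta}. For any test $\varphi\in L^2(0,T;H^1_{\Gamma_l}(\Omega))$, Cauchy--Schwarz together with the $L^\infty$ bounds (H1)--(H4), (H7), the uniform estimates of Step~4, and the fact that the $\eta$-terms are of the form $\eta\nabla(p_g^\eta-p_l^\eta)=\sqrt{\eta}\cdot\sqrt{\eta}\nabla p_c(\sa_l^\eta)$ (so $\le\sqrt{\eta}\,C$), give a bound independent of $\eta$ for each dual norm.

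\textbf{Main obstacle.} The delicate point is Step~3: the two $\eta$-terms have opposite signs and different weightings by $\rho_l^h$, so one must verify algebraically that they recombine into a nonnegative dissipative quantity controlling $|\nabla p_c|^2$; otherwise no uniform $\eta$-independent bound on $\mathcal{B}(\sa_l^\eta)$ can be passed to the limit. A secondary difficulty is ensuring that the diffusion term $\cteb(\X_l^w)^\eta(\Dlh)^\eta\nabla p_g^\eta\cdot\nabla p_g^\eta$ remains coercive uniformly in $\eta$, which requires the extension of $\X_l^w$ to be bounded below and $\Dlh$ to keep its lower bound $d^\ast$.
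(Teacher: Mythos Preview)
Your proposal is correct and follows essentially the same route as the paper: test \eqref{eq:pg_eta} by $g_g(p_g^\eta)$ and \eqref{eq:pl_eta} by $\ctea p_l^\eta-p_g^\eta$, reproduce the energy computation of Section~\ref{sec:energy}, observe (your Step~3) that the two $\eta$-terms combine into $\ctea\,\eta\int_{Q_T}|\nabla(p_g^\eta-p_l^\eta)|^2$ (this is exactly the term appearing in \eqref{estimation_inde_h}), and read off the time-derivative bounds from the weak formulations. One small slip in Step~4: you cannot absorb $\|\nabla p_l^\eta\|_{L^2}$ directly since the left-hand side only carries $M_l|\nabla p_l^\eta|^2$; the correct closure (which you in fact invoke right afterwards via \eqref{p} and \eqref{eq_magic}) is to write $p_l^\eta=p^\eta-\bar p(\sa_l^\eta)$ with $\bar p$ bounded, so that the right-hand side is controlled by $C(1+\|\nabla p^\eta\|_{L^2})$ and absorbed into $m_0\|\nabla p^\eta\|^2_{L^2}$.
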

\begin{proof} The positivity of the saturation \eqref{1eta} is
  conserved through the limit process. For the next four estimates, we
  just have to multiply \eqref{eq:pg_eta} by $g_{g}(p^{\eta}_{g}) =
  \int_{0}^{p^{\eta}_{g}}\frac{1}{\rho_{l}^{h}(z)\dd z}$ and
  \eqref{eq:pl_eta} by $\ctea p_l^\eta-p_g^\eta$ and adding them. 
  We follow the same calculation as in  section \ref{sec:energy} to provide 
 the energy estimates \eqref{3eta}--\eqref{7eta}.\\

  For all $\varphi,\ \psi\in L^{2}(0,T;H^{1}_{\Gamma_{l}}(\Omega))$
  and by using the formulation \eqref{eq:pg_eta}--\eqref{eq:pl_eta}
  with the relation \eqref{p} between the pressure of each phase and
  the global pressure, one gets
  \begin{multline*}
     \Big|\left\langle \phii
       \partial_{t}(\rho_{l}^{h}(p_{g}^{\eta})m(\sa_{l}^{\eta})),\varphi\right\rangle
     \Big|\leq  \Big|(\ctea-1)\eta
     \int_{Q_{T}}\rho_{l}^{h}(p^{\eta}_{g})\nabla
     p_c(\sa^{\eta}_{l})\cdot \nabla \varphi\dd x \dd t \Big| \\  +
     \Big|\int_{Q_{T}}\Ka
     \rho_{l}^{h}(p^{\eta}_{g})\left(M_{l}(\sa^{\eta}_{l})\nabla
       p^{\eta} + \nabla \B(\sa^{\eta}_{l}) \right)\cdot \nabla
     \varphi \dd x \dd t \Big|\\  + \Big|\int_{Q_{T}}X_l^{w}
     D_l^{h}(\rho_l^{h}(p_g^{\eta})^{'})\nabla p_g \cdot \nabla
     \varphi \dd x \dd t  \Big| + \Big| \int_{Q_{T}}r_g\varphi \dd x \dd t \Big|,
  \end{multline*}
and
\begin{align*}
    \Big|\left\langle \phii
      \partial_{t}(\sa_{l}^{\eta},\psi\right\rangle \Big| \leq &
    \Big|\ctea \eta\int_{Q_{T}}\nabla p_c(\sa^{\eta}_{l})\cdot \nabla
    \psi\dd x \dd t \Big| \\ & + \Big|\int_{Q_{T}}\Ka
    \left(M_{l}(\sa^{\eta}_{l})\nabla p^{\eta} + \nabla
      \B(\sa^{\eta}_{l}) \right)\cdot \nabla \psi \dd x \dd t \Big|\\
    & + \Big| \int_{Q_{T}}\frac{r_w}{\rho_w}\psi \dd x \dd t \Big|,
  \end{align*}
where the bracket $\left\langle \cdot,\cdot\right\rangle$ represents
the duality product between
$L^{2}(0,T;(H^{1}_{\Gamma_{l}}(\Omega))^{'})$ and
$L^{2}(0,T;H^{1}_{\Gamma_{l}}(\Omega))$.

From the estimations $(\ref{3eta})$ and $(\ref{7eta})$, we deduce
$$
|\left\langle \phii
  \partial_{t}(\rho_{l}^{h}(p_{g}^{\eta})m(\sa_{l}^{\eta})),\varphi\right\rangle
|\leq C\|\varphi \|_{ L^{2}(0,T;H^{1}_{\Gamma_{l}}(\Omega))},
$$
and
$$
|\left\langle \phii \partial_{t}(\sa_{l}^{\eta}),\varphi\right\rangle
|\leq C\|\psi \|_{ L^{2}(0,T;H^{1}_{\Gamma_{l}}(\Omega))},
$$
which establishes \eqref{8eta}--\eqref{9eta} and proves Lemma \ref{16}.
\end{proof}

In the next lemma, we derive estimates on differences of space and
time translates of the function $U^\eta=\rho_l^h(p_g^\eta) m(s_l^\eta)$
which imply that the sequence $(\rho_l^h(p_g^\eta)m(s_l^\eta))_\eta$ is
relatively compact in $L^1(Q_T)$.
\begin{lemme}\label{time-and-space-translate}
  $\left(\text{Space and time translate of } U\right)$. Under the
  assumptions $({H}1)-({H}8)$,  the following inequalities hold :
  \begin{align}
    & \int_{\Omega^{'}\times (0,T)}|U^\eta(t,x+y)- U^\eta(t,x)| \dd x \dd t \le 
    \omega(|y|), \label{est:space}\\ &
    \iint_{\Omega \times
      (0,T-\tau)}|U^\eta(t+\tau,x) - U^\eta(t,x)| \dd x \dd t \le \tilde{\omega}(\tau),\label{est:time}
\end{align}
for all $y \in \R^3$ and  for all $\tau\in (0,T)$; with $\Omega '=\{x \in \Omega, x+y\in
\Omega \}$ and the function $\omega$ and $\tilde{\omega}$ are continuous, independent of $\eta$ and satisfying $\lim_{|y|\to 0}\omega(|y|)=0$ and 
 $\lim_{\tau\to  0}\tilde{\omega}(\tau)=0$.
\end{lemme}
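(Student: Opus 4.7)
The strategy for \eqref{est:space} is the identity
\begin{equation*}
U^\eta(\cdot+y)-U^\eta = m(s_l^\eta(\cdot+y))\bigl[\rho_l^h(p_g^\eta(\cdot+y))-\rho_l^h(p_g^\eta)\bigr] + \rho_l^h(p_g^\eta)\bigl[m(s_l^\eta(\cdot+y))-m(s_l^\eta)\bigr],
\end{equation*}
combined with the $L^\infty$- and Lipschitz-bounds on $\rho_l^h$ and $m$ from (H3)--(H4) and the continuity extension of $m$. The first contribution is controlled by $\int_{Q_T}|p_g^\eta(\cdot+y)-p_g^\eta|\,\dd x\,\dd t$, which by Cauchy--Schwarz and the standard $H^1$-translation inequality is $O(|y|)$ uniformly by \eqref{3eta}. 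For the second, hypothesis (H8) gives $|s_l^\eta(\cdot+y)-s_l^\eta|\le C\,|\B(s_l^\eta)(\cdot+y)-\B(s_l^\eta)|^{\theta}$; H\"older's inequality on $Q_T$ with conjugate exponents $2/\theta$ and $2/(2-\theta)$ reduces the integral of this to the $L^2$-translate of $\B(s_l^\eta)$, which is $O(|y|)$ via \eqref{7eta}, yielding an $O(|y|^{\theta})$ contribution. Hence $\omega(|y|)=C(|y|+|y|^{\theta})$ is continuous and vanishes at $0$.

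\textbf{Time translate.} I would adapt the Alt--Luckhaus-type duality argument used for analogous systems in \cite{CS07,ZS10,CS10}. Denoting $\delta f:=f(t+\tau,\cdot)-f(t,\cdot)$, the plan is to pair $\delta U^\eta$ with $\delta g_g(p_g^\eta)$ and $\delta s_l^\eta$ with $\ctea\,\delta p_l^\eta-\delta p_g^\eta$, sum, and integrate over $t\in(0,T-\tau)$. The concavity inequality \eqref{132}, applied with $t+\tau$ and $t$ in place of $n+1$ and $n$, provides a lower bound by the integrated telescoping of $m(s_l^\eta)\mathcal{H}_g(p_g^\eta)-\ctea\,\pc(s_l^\eta)$. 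For the upper bound, I would write $\phii\,\delta U^\eta=\int_t^{t+\tau}\partial_\sigma(\phii U^\eta)\,\dd\sigma$ and $\phii\,\delta s_l^\eta=\int_t^{t+\tau}\partial_\sigma(\phii s_l^\eta)\,\dd\sigma$ in duality with $H^1_{\Gamma_l}(\Omega)$, substitute via \eqref{eq:pg_eta}--\eqref{eq:pl_eta}, integrate by parts in $x$, and apply Cauchy--Schwarz on the $\tau$-interval and on $t$. The uniform bounds \eqref{3eta}, \eqref{5eta}, \eqref{8eta}--\eqref{9eta} combined with (H4) (through $\nabla g_g(p_g^\eta)=\nabla p_g^\eta/\rho_l^h(p_g^\eta)$ and $\rho_l^h\ge \rho_m>0$) then yield an upper bound of order $\tau^{1/2}$, and the $L^\infty$-bound on $U^\eta$, used to interpolate the resulting $L^2$-information to $L^1$, gives $\tilde\omega(\tau)\to 0$ as $\tau\to 0$.

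\textbf{Main obstacle.} The critical difficulty is the non-monotone joint dependence of $U^\eta=\rho_l^h(p_g^\eta)m(s_l^\eta)$ on its two primary unknowns: no scalar monotonicity is at hand, and only the joint pairing supplied by \eqref{132} is available. Because $\mathcal{H}_g$ is merely sublinear, the pairing does not pointwise control $|\delta U^\eta|$; closing the argument therefore requires combining this joint monotonicity with the $L^\infty$-boundedness of $U^\eta$ (from (H4) and the continuity extension of $m$) and with the $H^{-1}$-regularity of the time derivatives from \eqref{8eta}--\eqref{9eta}, in order to interpolate the required quantitative modulus on $\|\delta U^\eta\|_{L^1(Q_T)}$ from the weaker bounds provided by the monotonicity identity.
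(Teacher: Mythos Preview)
Your space-translate argument matches the paper's: both split $U^\eta(\cdot+y)-U^\eta$ into a $\rho_l^h$-difference term and an $m$-difference term, control the latter through the H\"older property of $\B^{-1}$ together with \eqref{7eta}, and arrive at $\omega(|y|)=C(|y|+|y|^\theta)$. The only variation is in handling the $\rho_l^h$-part: you invoke the $H^1$-bound on $p_g^\eta$ from \eqref{3eta} directly, whereas the paper decomposes $p_g^\eta=p^\eta-\tilde{p}(s_l^\eta)$ and treats the two pieces via the global-pressure bound and the saturation translate, picking up an additional $|y|^\theta$ contribution in $\mathcal{E}_2$. Your route is slightly shorter and equally valid.

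For the time translates the paper gives no argument at all; it simply refers to \cite{eymard-hilhorst} (after passing to $V^\eta=\phii U^\eta$). Your Alt--Luckhaus-type sketch is a plausible attempt and you are right to flag the obstacle: the symmetrised application of \eqref{132} yields only nonnegativity of the paired differences, not a quantitative lower bound on $\|\delta U^\eta\|_{L^1}$, so the argument as written does not close without further interpolation. A more direct route, closer in spirit to the cited reference and bypassing the nonmonotone coupling entirely, is available once \eqref{est:space} is established: mollify $U^\eta$ in the space variable at scale $\epsilon$, use the bound \eqref{8eta} on $\phii\partial_t U^\eta$ in $L^2(0,T;(H^1_{\Gamma_l}(\Omega))')$ to obtain an $O_\epsilon(\tau)$ time-translate estimate for the mollified function, and return to $U^\eta$ through the already-proven spatial modulus at scale $\epsilon$; optimising in $\epsilon$ then yields $\tilde\omega(\tau)\to 0$ uniformly in $\eta$.
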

\begin{proof}
  For the space translates, we observe that 
\begin{equation*}
      \begin{split}
        &\int_{(0,T)\times\Omega^{'}} |U^\eta(t,x+y)-U^\eta(t,x)| \dd x\dd t\\
        & = \int_{(0,T)\times\Omega^{'}}
        \Big|\Big(\rho_l^h(p_g^\eta)m(s_l^\eta)\Big)(t,x+y) -
        \Big(\rho_l^h(p_g^\eta)m(s_l^\eta)\Big)(t,x)\Big| \dd x\dd t\\
        & \le \int_{(0,T)\times\Omega^{'}}
        \Big|m(s_l^\eta)(t,x+y) \big(\rho_l^h(p_g^\eta(t,x+y)) -\rho_l^h(p_g^\eta(t,x))\big) \Big|\dd x\dd t \\
        & \qquad +   \int_{(0,T)\times\Omega^{'}} \Big| \rho_l^h(p_g^\eta)(t,x)(m(s_l^\eta)(t,x+y)  - m(s_l^\eta) (t,x)) \Big|\dd x\dd t\\
        & \le \mathcal{E}_1+\mathcal{E}_2,
        \end{split}
\end{equation*}
where $\mathcal{E}_1$ and $\mathcal{E}_2$ defined as follows
\begin{equation}\label{e1trans}
  \mathcal{E}_1 = \rho_M \int_{(0,T)\times\Omega^{'}} \Big|s_l^\eta(t,x+y)  - s_l^\eta(t,x)\Big|\dd x\dd t,
\end{equation}
\begin{equation}\label{e2transs}
  \mathcal{E}_2 = \int_{(0,T)\times\Omega^{'}}
  \Big|\rho_l^h(p_g^\eta(t,x+y)) -\rho_l^h(p_g^\eta(t,x)) \Big|\dd x\dd t.
\end{equation}

To handle with the space translates on saturation, we use the fact
that $\B^{-1}$ is an H\"older function, applying the
Cauchy-Schwarz inequality and from \eqref{7eta}, we deduce
\begin{align}\label{E1}
  \mathcal{E}_1 & \le C \Big[\int_{(0,T)\times\Omega^{'}}
  \Big|\B(s_l^\eta(t,x+y) )- \B(s_l^\eta(t,x))\Big| \dd x\dd
  t\Big]^\theta \notag \\ & \le C \Big[\int_{0}^{T}\int_{\Omega^{'}}
  \Big(
  \int_{0}^{1} \nabla \B(s_l^\eta(t,x+ry)).y\dd r\Big)\dd x \dd t\Big]^\theta \\
  & \le C \Big[\int_{0}^{T}\int_{\Omega^{'}} \Big( \int_{0}^{1}
  \big|\nabla \B(s_l^\eta(t,x+ry))\big|^2\dd r\Big)^\frac{1}{2}  |y| \dd x \dd t\Big]^\theta
  \notag \\ & \le C |y|^\theta\notag.
\end{align}

To treat the space translates of $\mathcal{E}_2$, we use the relationship between
the gas pressure and the global pressure, namely : $p_g=p-\tilde{p}$
defined in \eqref{p}, then, from the estimation on the global
pressure \eqref{3eta} and the estimate \eqref{E1} we have
$$
\mathcal{E}_2 \le C (|y|+|y|^\theta). 
$$
Define $V^\eta = \phii U^\eta$. From assumption $(H1)$ on the porosity, we deduce thee space  translates  on $V^\eta$.  
 The proof of the time translates of $V^\eta$ can be found in
\cite{eymard-hilhorst} for more details.
\end{proof}

From the previous two lemmas, we deduce the following convergences.
\begin{lemme} (Strong and weak convergences). Up to a subsequence the
  sequence $(\sa^{\eta}_\alpha)_{\eta}$,
  $(p^{\eta}:=p^{\eta}_{l}+\overline{p}(\sa^{\eta}_{l}))_{\eta}$ and
  $(p_\alpha^{\eta})_{\eta}$ verify the following convergence
\begin{align}
  & p^{\eta}\longrightarrow p && \text{ weakly in} \ L^{2}(0,T;H^{1}_{\Gamma_l}(\Omega)) ,\label{1ta}\\
  & \B(\sa_{l}^{\eta})\longrightarrow \B(\sa_{l}) && \text{ weakly in} \ L^{2}(0,T;H^{1}(\Omega)) ,\label{2ta}\\
  & p_g^{\eta}\longrightarrow p_g && \text{ weakly in} \ L^{2}(0,T;H^{1}_{\Gamma_l}(\Omega)) ,\label{11:59}\\
  & \sa_{l}^{\eta}\longrightarrow \sa_{l} &&  \text{ strongly in } L^2(Q_T) \text{, a.e. in} \ Q_T,\label{4ta}\\
  & \sa_l\geq 0  && \text{ almost everywhere in } Q_T, \label{6ta}\\
  & p_\alpha^{\eta}\longrightarrow p_\alpha && \text{ almost everywhere in } \ Q_T ,\label{7ta}\\
  & \phii \partial_t (U^\eta)\longrightarrow \phii \partial_t (\rho_l^{h}(p_g)m(\sa_l))&&  \text{ weakly in} \ L^{2}(0,T;H^{1}_{\Gamma_l}(\Omega)) ,\label{8ta}\\
  & \phii \partial_t \sa_l^{\eta}\longrightarrow \phii \partial_t
  \sa_l && \text{ weakly in } \
  L^{2}(0,T;H^{1}_{\Gamma_l}(\Omega)),\label{9ta}
\end{align}
where $U^\eta=\rho_l^{h}(p_g^{\eta})m(\sa_l^{\eta})$.
\end{lemme}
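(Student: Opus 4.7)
The plan is to combine the uniform bounds of Lemma~\ref{16} with the space--time translate estimates of Lemma~\ref{time-and-space-translate} to extract a single subsequence along which all the claimed convergences hold. From the $L^{2}(0,T;H^{1}_{\Gamma_{l}}(\Omega))$ bounds in \eqref{3eta}, the bound \eqref{7eta} on $\B(\sa_l^\eta)$, and the dual bounds \eqref{8eta}--\eqref{9eta}, Banach--Alaoglu produces weak limits $p^\eta\rightharpoonup p$, $p_g^\eta\rightharpoonup p_g$, $\B(\sa_l^\eta)\rightharpoonup \xi$ and weak limits for the two time derivatives; this already yields the skeletons of \eqref{1ta}, \eqref{11:59}, \eqref{8ta}, \eqref{9ta}. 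Identification of the nonlinear weak limits with the expressions written on the right-hand sides is postponed until pointwise convergence is available.

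Next I apply the Kolmogorov--Riesz--Fr\'echet compactness criterion. Lemma~\ref{time-and-space-translate} gives directly $U^\eta=\rho_l^h(p_g^\eta)m(\sa_l^\eta)\to U$ in $L^{1}(Q_T)$ and, up to a further extraction, a.e.\ in $Q_T$. For $\sa_l^\eta$ alone I read off the space translate estimate from the intermediate term $\mathcal{E}_1$ in the proof of Lemma~\ref{time-and-space-translate}: the H\"older property of $\B^{-1}$ together with the $L^{2}(0,T;H^{1}(\Omega))$ bound \eqref{7eta} yields $\|\sa_l^\eta(\cdot+y,\cdot)-\sa_l^\eta\|_{L^{1}(Q_T)}\le C|y|^{\theta}$. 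The corresponding time translate estimate is obtained from the dual bound \eqref{9eta} combined with the $H^{1}$ bound on $\B(\sa_l^\eta)$ by an Alt--Luckhaus/Simon-type interpolation as in \cite{eymard-hilhorst}. Kolmogorov then produces $\sa_l^\eta\to\sa_l$ in $L^{2}(Q_T)$ and, after a last extraction, a.e.\ in $Q_T$; the positivity \eqref{6ta} is then immediate, and continuity of $\B$ identifies $\xi=\B(\sa_l)$, proving \eqref{2ta} and \eqref{4ta}.

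For the a.e.\ convergence \eqref{7ta} of the pressures, I invert the product relation. The factor $m(\sa_l)=\sa_l+\ctea(1-\sa_l)$ together with its continuous extension is bounded below by a strictly positive constant on the working range, so
\[
\rho_l^h(p_g^\eta)=\frac{U^\eta}{m(\sa_l^\eta)}\;\longrightarrow\;\frac{U}{m(\sa_l)}\quad\text{a.e.\ in }Q_T.
\]
Since $\rho_l^h$ is continuous and strictly increasing by (H4), its inverse is continuous, so $p_g^\eta$ converges a.e.\ to a measurable function which, by uniqueness of weak limits, coincides with the weak $L^{2}(H^{1})$ limit $p_g$. Then $p_l^\eta=p_g^\eta-p_c(\sa_l^\eta)\to p_g-p_c(\sa_l)=:p_l$ a.e.\ by continuity of $p_c$, giving \eqref{7ta}. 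Once these strong and a.e.\ convergences are in hand, the distributional limits in \eqref{8ta}--\eqref{9ta} are identified by passing to the limit in the definition of the distributional derivative, using the strong $L^{1}$ convergence of $U^\eta$ and of $\sa_l^\eta$.

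I expect the main obstacle to be the separate strong compactness of $\sa_l^\eta$: Lemma~\ref{time-and-space-translate} controls translates of the product $U^\eta$ only, while the individual factors $\rho_l^h(p_g^\eta)$ and $m(\sa_l^\eta)$ are not a~priori strongly compact on their own. Extracting the space translate of $\sa_l^\eta$ from $\mathcal{E}_1$ is straightforward thanks to the H\"older $\B^{-1}$, but upgrading the dual bound \eqref{9eta} to a genuine time translate estimate on $\sa_l^\eta$ requires the Alt--Luckhaus trick of testing against piecewise-in-time averages together with the $H^{1}$ control of $\B(\sa_l^\eta)$, and is the technically delicate step of the whole argument; once it is settled the remaining identifications reduce to continuity of the nonlinearities and monotonicity of $\rho_l^h$.
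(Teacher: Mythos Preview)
Your proposal is correct and follows essentially the same route as the paper: weak limits from the uniform bounds of Lemma~\ref{16}, Kolmogorov compactness for $U^\eta$ via Lemma~\ref{time-and-space-translate}, a second Kolmogorov argument for $\sa_l^\eta$ obtained by reproducing the translate estimates (space from $\mathcal{E}_1$, time from the dual bound and \cite{eymard-hilhorst}), and then recovery of the a.e.\ pressure limits from the product $U^\eta=\rho_l^h(p_g^\eta)m(\sa_l^\eta)$ together with the capillary relation. The paper is terser about the division step and about the time translate for $\sa_l^\eta$, but the underlying mechanism is identical to what you describe.
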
              
\begin{proof} The weak convergences \eqref{1ta}--\eqref{11:59} follows
  from the uniform estimates \eqref{3eta} and \eqref{7eta} of lemma \ref{16}.\\
  By the Riesz-Frechet-Kolmogorov compactness criterion, the relative
  compactness of $V^\eta$ in $L^1(Q_T)$ is a consequence of Lemma
  \ref{time-and-space-translate} and then ensures the following strong
  convergences 
\begin{align*}
  & \phi\rho_l^h(p_g^\eta)m(\sa_l^\eta) \longrightarrow l \text{ strongly
    in $L^1(Q_T)$ and a.e. in $Q_T$ },
\end{align*} 
and consequently 
\begin{align}\label{conv-}
  & \rho_l^h(p_g^\eta)m(\sa_l^\eta) \longrightarrow U =l/\phii\text{ strongly
    in $L^1(Q_T)$ and a.e. in $Q_T$ },
\end{align} 
In order to prove the convergence \eqref{4ta}, we reproduce the
previous Lemma \ref{time-and-space-translate} for $V^\eta = \phii s_l^\eta$ and
as an application of the Riesz-Frechet-Kolmogorov compactness
criterion we establish \eqref{4ta}. And from \eqref{4ta}, we deduce
\eqref{6ta}.

The convergence \eqref{conv-} combined with \eqref{4ta}, to prove that
\begin{align*}
  & p_g^\eta \longrightarrow p_g \text{ a.e. in $Q_T$ },
\end{align*} 
then the convergence $(\ref{7ta})$ for $\alpha=g$ is then established.
And again as consequence of \eqref{4ta} with the capillary pressure
law, we deduce $(\ref{7ta})$ for $\alpha=l$. At last, the weak
convergence \eqref{8ta} and \eqref{9ta} is a
consequence of the estimate $(\ref{8eta})$ and $(\ref{9eta})$.\\
 \end{proof}
 In order to achieve the proof of Theorem $\ref{theo:existence}$, it
 remains to pass to the limit as $\eta$ goes to zero in the
 formulations \eqref{eq:pg_eta}--\eqref{eq:pl_eta}, for all smooth
 test functions $\varphi$ and $\psi$ in
 $C^1([0,T];H^1_{\Gamma_l}(\Omega))$ such that $\varphi(T)=\psi(T)=0$,
\begin{equation}\label{fv:eta_pg}
  \begin{aligned}
    & -\int_{Q_T}\phii
    \rho_{l}^h(p_{g}^{\eta})m(\sa_{l}^{\eta})\partial_{t}\varphi \dd x
    \dd t + \int_{Q_{T}}\cteb \X_l^{w} \Dlh \nabla p_g \cdot \nabla
    \varphi \dd x \dd t \\ & \quad \quad + \int_{Q_{T}}\Ka
    \rho_l^{h}(p_g^\eta)M_l(\sa_l^{\eta})\left(\nabla
      p_l^{\eta}-\rho_l(p_l)\G\right)\cdot \nabla \varphi \dd x \dd t\\
    & \quad \quad \quad \quad + \ctea\int_{Q_{T}}\Ka
    \rho_l^{h}(p_g^{\eta})M_g(\sa_l^{\eta}) \left(\nabla p_g^{\eta}
      -\rho_g(p_g)\G\right)\cdot \nabla \varphi \dd x \dd t \\ &\quad
    \quad \quad \quad \quad \quad \quad \quad +
    (\ctea-1)\eta\int_{Q_{T}}\rho_l^{h}(p_g^{\eta})\nabla (p_g^{\eta}
    - p_l^{\eta}) \cdot \nabla \varphi \dd x\dd t \\ &\quad \quad
    \quad \quad \quad \quad \quad \quad \quad \quad =
    \int_{Q_{T}}r_{g}\varphi \dd x \dd t+ \int_{Q_T}\phii
    \rho_{l}^h(p_{g}^{0})m(\sa_{l}^{0})\varphi(0,x) \dd x \dd t,
\end{aligned}
\end{equation}
\begin{equation}\label{fv:eta_pl}
  \begin{aligned}
    & -\int_{Q_T}\phii \sa_{l}^{\eta}\partial_{t}\psi \dd x \dd t +
    \int_{Q_{T}}\Ka M_{l}(\sa_l^{\eta}) \left(\nabla p_l^{\eta}
      -\rho_l(p_l)\G\right) \cdot \nabla \psi \dd x \dd t\\ & \quad
    \quad - \eta\int_{Q_{T}}\nabla (p_g^{\eta} - p_l^{\eta}) \cdot
    \nabla \psi \dd x\dd t =
    \int_{Q_{T}}\frac{r_{\omega}}{\rho_l^w}\psi \dd x \dd t +
    \int_{Q_T}\phii \sa_{l}^{0}\psi(0,x) \dd x \dd t,
\end{aligned}
\end{equation}

The first term in \eqref{fv:eta_pg} and \eqref{fv:eta_pl} converge due
to the strong convergence of $\rho_l^h(p_g^\eta)m(\sa_l^\eta)$ to
$\rho_l^h(p_g)m(\sa_l)$ in $L^2(Q_T)$ and the strong convergence of
$\sa_l^\eta$ to $\sa_l$ in $L^2(Q_T)$.

The third and fourth term in \eqref{fv:eta_pg} can be written as,
\begin{equation}\label{ya}
  \begin{aligned}
    \int_{Q_T}\Ka M_\alpha(\sa_\alpha^\eta)
    \rho_\alpha(p_\alpha^\eta)\nabla p_\alpha^\eta \cdot \nabla
    \varphi \dd x \dd t & = \int_{Q_T}\Ka M_\alpha(\sa_\alpha^\eta)
    \rho_\alpha(p_\alpha^\eta)\nabla p^\eta \cdot \nabla \varphi \dd x
    \dd t \\ & + \int_{Q_T}\Ka \rho_\alpha(p_\alpha^\eta)\nabla
    \B(\sa_\alpha^\eta) \cdot \nabla \varphi \dd x \dd t.
  \end{aligned}
\end{equation}

The two terms on the right hand side of the equation $(\ref{ya})$
converge arguing in two steps. Firstly, the Lebsgue theorem and the
convergences \eqref{4ta} and \eqref{7ta}, establish
\begin{align*}
  & \rho_\alpha(p_\alpha^\eta) M_\alpha(\sa_\alpha^\eta)\nabla \varphi
  \longrightarrow \rho_\alpha(p_\alpha)M_\alpha(\sa_\alpha)\nabla
  \varphi && \text{ strongly in } (L^2(Q_{T}))^d,\\ &
  \rho_\alpha(p_\alpha^\eta) \nabla \varphi \longrightarrow
  \rho_\alpha(p_\alpha)\nabla \varphi && \text{ strongly in }
  (L^2(Q_{T}))^d.
\end{align*}
Secondly, the weak convergence on global pressure \eqref{1ta} and the
weak convergence \eqref{2ta} combined to the above strong convergences
allow the convergence for the terms of the right hand side of
\eqref{ya}. In the same way for the second term of the equation \eqref{fv:eta_pl}.\\
The fifth term of equations $(\ref{fv:eta_pg})$ can be written as
$$
\eta\int_{Q_T}\rho_l^h(p_g^\eta)\nabla (p_g^\eta - p_l^\eta) \nabla
\varphi \dd x \dd t = \sqrt{\eta} \int_{Q_T}\rho_l^h(p_g^\eta)
(\sqrt{\eta}\nabla p_c(\sa_l^\eta))\nabla \varphi \dd x \dd t,
$$
the Cauchy-Schwarz inequality and the uniform estimate $(\ref{4eta})$
ensure the convergence of this term to zero as $\eta$ goes to zero. In
the same way for the third term of equation \eqref{fv:eta_pl}. The
other terms converge using \eqref{4ta}--\eqref{7ta} and the
Lebesgue dominated convergence theorem.\\
The weak formulations \eqref{eq:pg} and \eqref{eq:pl} are then
established. And The main theorem \ref{theo:existence} is then
established.

\section{References}


\end{document}